\crefname{equation}{}{}
\crefname{lemma}{Lemma}{Lemmata}
\numberwithin{equation}{section}
\theoremstyle{plain}
\newtheorem{theorem}[equation]{Theorem}
\newtheorem{proposition}[equation]{Proposition}
\newtheorem{lemma}[equation]{Lemma}
\theoremstyle{definition}
\newtheorem{definition}[equation]{Definition}
\newtheorem*{claim}{Claim}
\theoremstyle{remark}
\newtheorem{remark}[equation]{Remark} 
\newtheorem*{ack}{Acknowledgements}
\newtheorem*{conventions}{Conventions}
\newcommand*{\intref}[2]{\def\tmp{#1}\ifx\tmp\empty\hyperref[#2]{\ref*{#2}}\else\hyperref[#2]{#1~\ref*{#2}}\fi}
\newcommand{\Aut}{\operatorname{Aut}}
\newcommand{\CH}{\operatorname{CH}}
\newcommand{\ch}{\operatorname{ch}}
\newcommand{\Coh}{\operatorname{Coh}}
\renewcommand{\dim}{\operatorname{dim}}
\newcommand{\Ext}{\operatorname{Ext}}
\newcommand{\Hom}{\operatorname{Hom}}
\newcommand{\Pic}{\operatorname{Pic}}
\newcommand{\sheafhom}{\mathop{\mathcal{H}\! \mathit{om}}\nolimits}
\newcommand{\sheafext}{\mathop{\mathcal{E}\! \mathit{xt}}\nolimits}
\newcommand{\Supp}{\operatorname{Supp}}
\newcommand{\mcC}{\mathcal{C}}
\newcommand{\mcH}{\mathcal{H}} 
\newcommand{\mcL}{\mathcal{L}}
\newcommand{\mcO}{\mathcal{O}}
\newcommand{\mcP}{\mathcal{P}}
\newcommand{\mcU}{\mathcal{U}}
\newcommand{\sfD}{\mathsf D}
\newcommand{\bbC}{\mathbb C}
\newcommand{\bbP}{\mathbb P}
\newcommand{\bbQ}{\mathbb Q} 
\newcommand{\bbZ}{\mathbb Z}
\title[Autoequivalences of Blow-Ups of Minimal Surfaces]{Autoequivalences of Blow-Ups of Minimal Surfaces}
\author[Xianyu~Hu]{Xianyu Hu}
\address{Fakult\"at f\"ur Mathematik,
	Technische Universit\"at M\"unchen, D-85747 Garching bei M\"unchen, Germany}
\email{xianyu.hu@tum.de}
\author[Johannes~Krah]{Johannes Krah}
\address{Fakult\"at f\"ur Mathematik, Universit\"at Bielefeld, D-33501 Bielefeld, Germany
}
\email{jkrah@math.uni-bielefeld.de}
\thanks{The research of both authors
	was funded by the Deutsche Forschungsgemeinschaft (DFG, German Research Foundation) -- Project-ID 491392403 -- TRR 358.
	Further, the research of X.H.\ was funded by the Deutsche Forschungsgemeinschaft (DFG, German Research Foundation) -- 516701553, and the research of J.K.\ was funded by the Deutsche Forschungsgemeinschaft (DFG, German Research Foundation) under Germany's Excellence Strategy -- EXC-2047/1 -- 390685813.}
\begin{document}
	
	\begin{abstract} 
		Let $X$ be the blow-up of $\bbP^2_\bbC$ in a finite set of very general points.
		We deduce from the work of Uehara \cite{uehara_a_trichotomy_for_the_autoequivalence_groups_on_smooth_projective_surfaces} that $X$ has only standard autoequivalences, no nontrivial Fourier--Mukai partners, and admits no spherical objects.
		If $X$ is the blow-up of $\bbP^2_\bbC$ in $9$ very general points, we provide an alternate and direct proof of the corresponding statement.
		Further, we show that the same result holds if $X$ is a blow-up of finitely many points in a minimal surface of nonnegative Kodaira dimension which contains no $(-2)$-curves.
		Independently, we characterize spherical objects on blow-ups of minimal surfaces of positive Kodaira dimension.
	\end{abstract}
	
	\keywords{Derived Categories, Autoequivalences, Rational Surfaces, Spherical Objects}
	\subjclass[2020]{14F08, 14J26}

	
	\maketitle
	
	\section{Introduction}
	Let $X$ be a smooth projective variety over the complex numbers and denote by $\sfD^b(X)$ the bounded derived category of coherent sheaves on $X$.
	If the canonical bundle $\omega_X$ is ample or anti-ample, then, by Bondal--Orlov \cite{bondal_orlov_reconstruction_of_a_variety_from_the_derived_category_and_groups_of_autoequivalences}, the group of autoequivalences $\Aut(\sfD^b(X))$ only consists of so-called \emph{standard autoequivalences}, i.e.\ 
	$$ \Aut(\sfD^b(X)) = \Pic(X)\rtimes \Aut(X)\times \bbZ[1].$$
	In general, the standard autoequivalences $\Pic(X)\rtimes \Aut(X)\times \bbZ[1]$ form a subgroup of $\Aut(\sfD^b(X))$ and $\sfD^b(X)$ often admits of non-standard autoequivalences, see, e.g., \cite{orlov_derived_categories_of_coherent_sheaves_on_abelian_varieties_and_equivalences_between_them} for the case of abelian surfaces and \cite{bayer_bridgeland_derived_automorphism_groups_of_k3_surfaces_of_picard_rank_1} for the case of $K3$ surfaces of Picard rank $1$.
	A natural source for non-standard autoequivalences are so-called \emph{spherical twists} \cite{seidel_thomas_braid_group_actions_on_derived_categories_of_coherent_sheaves}.
	
	In contrast to the case of varieties with trivial canonical class, a spherical object on a variety with nontrivial and non-torsion canonical class has to be supported on a proper closed subset, see \cref{lem:spherical_obj_supported_on_curve}.
	If $X$ is a certain toric surface \cite[Thm.~1, Thm.~2]{broomhead_ploog_autoequivalences_of_toric_surfaces} or a surface of general type whose canonical model has at worst $A_n$-singularities \cite[Thm.~1.5]{ishii_uehara_autoequivalences_of_derived_categories_on_the_minimal_resolutions_of_an_singularities_on_surfaces}, then $\Aut(\sfD^b(X))$ is generated by standard equivalences and spherical twists.
	
	In the first part of this paper, we focus on rational surfaces $X$ which are blow-ups of $\bbP^2_\bbC$ in $n$ very general points.
	By a result of de~Fernex, recalled in \cref{prop:negative_curves_de_fernex}, such a surface $X$ does not contain any $(-2)$-curve.
	Motivated by the results of \cite{ishii_uehara_autoequivalences_of_derived_categories_on_the_minimal_resolutions_of_an_singularities_on_surfaces}, it is reasonable to expect that the absence of $(-2)$-curves implies the absence of spherical objects.
	Indeed, utilizing the above result of de~Fernex, we deduce the following \cref{thm:main_thm} from the work of Uehara, see \cref{sec:general_case}.
	\begin{theorem}\label{thm:main_thm}
		Let $X$ be the blow-up of $\bbP^2_\bbC$ in $n$ very general points. Then the following statements hold:
		\begin{enumerate}
			\item \label{item:autoequvialences_are_standard} Any autoequivalence of $X$ is standard, i.e.\ $\Aut(\sfD^b(X)) = \Pic(X)\rtimes \Aut(X)\times \bbZ[1]$.
			\item \label{item:no_fm_partners} If $Y$ is a smooth projective variety such that $\sfD^b(X) \cong \sfD^b(Y)$, then $X \cong Y$.
			\item \label{item:no_spherical_objects} There exists no spherical object in $\sfD^b(X)$.
		\end{enumerate}
	\end{theorem}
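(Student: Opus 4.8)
The plan is to reduce the three statements to a single structural input: a complete description of the autoequivalence group obtained by Uehara in \cite{uehara_a_trichotomy_for_the_autoequivalence_groups_on_smooth_projective_surfaces}. Uehara's trichotomy classifies $\Aut(\sfD^b(X))$ for smooth projective surfaces according to the position of $X$ in the Enriques--Kodaira classification, and for rational surfaces the answer is governed by the configuration of $(-1)$- and $(-2)$-curves. The crucial geometric ingredient is de~Fernex's theorem, recalled in \cref{prop:negative_curves_de_fernex}: if $X$ is the blow-up of $\bbP^2_\bbC$ in $n$ very general points, then $X$ contains no $(-2)$-curve. So the first step is to feed ``$X$ has no $(-2)$-curve'' into Uehara's classification and read off that, for such $X$, the group $\Aut(\sfD^b(X))$ contains no spherical twists and no other exotic elements, i.e.\ it coincides with the subgroup of standard autoequivalences. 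This immediately gives \ref{item:autoequvialences_are_standard}.

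For \ref{item:no_fm_partners}, I would invoke the standard fact that a Fourier--Mukai partner $Y$ of a surface $X$ is again a smooth projective surface, and that the Enriques--Kodaira class, the Kodaira dimension, and more refined birational invariants are preserved under derived equivalence (Orlov; Kawamata for surfaces). Since $X$ is a rational surface with no $(-2)$-curves and, by Uehara's analysis, no autoequivalences beyond the standard ones, the set of Fourier--Mukai partners is forced to be a single point. Concretely, one uses that for such surfaces the derived category determines $X$ up to isomorphism — this can either be extracted directly from Uehara's paper or deduced from (i) together with the observation that any equivalence $\sfD^b(X)\cong\sfD^b(Y)$ composed with its inverse lands in $\Aut(\sfD^b(X))$, combined with the fact that on a surface with only standard autoequivalences every such equivalence is induced by an isomorphism twisted by a line bundle.

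For \ref{item:no_spherical_objects}, the point is that a spherical object $E$ gives rise to a spherical twist $T_E\in\Aut(\sfD^b(X))$ which is never a standard autoequivalence (for instance because its action on numerical Grothendieck group, a reflection, cannot come from an automorphism and shift of a surface that is not a point in the relevant moduli sense); hence the existence of a spherical object would contradict \ref{item:autoequvialences_are_standard}. Alternatively, and more in the spirit of the absence of $(-2)$-curves: by \cref{lem:spherical_obj_supported_on_curve} any spherical object on $X$ (which has non-torsion $\omega_X$, being rational with $n\ge 1$) is supported on a proper closed subset, hence on a curve, and one analyzes the possible supports — a standard argument (cf.\ \cite{ishii_uehara_autoequivalences_of_derived_categories_on_the_minimal_resolutions_of_an_singularities_on_surfaces}) shows a spherical sheaf supported on a curve forces that curve to be a $(-2)$-curve, which do not exist on $X$ by de~Fernex.

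The main obstacle is entirely bookkeeping: Uehara's theorem is stated as a trichotomy with several cases, and one must verify that the hypothesis ``blow-up of $\bbP^2$ in $n$ very general points'' together with ``no $(-2)$-curves'' lands in the branch where the conclusion is ``only standard autoequivalences.'' Once the correct case is identified and de~Fernex's input is in place, statements \ref{item:autoequvialences_are_standard}--\ref{item:no_spherical_objects} follow formally. No new calculation is needed beyond citing \cref{prop:negative_curves_de_fernex}, \cref{lem:spherical_obj_supported_on_curve}, and the relevant case of \cite{uehara_a_trichotomy_for_the_autoequivalence_groups_on_smooth_projective_surfaces}.
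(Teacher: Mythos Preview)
Your overall strategy matches the paper's: invoke de~Fernex (\cref{prop:negative_curves_de_fernex}) to rule out $(-2)$-curves, then apply Uehara's trichotomy to get \labelcref{item:autoequvialences_are_standard}, and deduce \labelcref{item:no_fm_partners} and \labelcref{item:no_spherical_objects} from there. However, what you call ``entirely bookkeeping'' hides a genuine step. Uehara's \cite[Thm.~1.3]{uehara_a_trichotomy_for_the_autoequivalence_groups_on_smooth_projective_surfaces} applies only when the Fourier--Mukai support dimension is $N_X=2$, which by \cite[Thm.~1.1]{uehara_a_trichotomy_for_the_autoequivalence_groups_on_smooth_projective_surfaces} is equivalent to $K_X$ not being numerically trivial \emph{and} $X$ admitting no minimal elliptic fibration. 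The latter is not automatic: for $n=9$ one has $K_X^2=0$, and one must argue (as the paper does, citing \cite{cantant_dolgachev_rational_surfaces_with_a_large_group_of_automorphisms} and the fact that $\lvert -mK_X\rvert$ is zero-dimensional for very general points) that $X$ is nonetheless not an elliptic surface. Without this, you have not placed $X$ in the correct branch of the trichotomy, and the argument is incomplete. The paper also uses this same ``no minimal elliptic fibration'' input, via Kawamata \cite{kawamata_d_equivalence_and_k_equivalence}, to obtain \labelcref{item:no_fm_partners} directly rather than through \labelcref{item:autoequvialences_are_standard}.

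Your alternative route to \labelcref{item:no_spherical_objects} also has a gap. \Cref{lem:spherical_obj_supported_on_curve} only gives that the support is a curve $C$ with $K_X\cdot C_i=0$ for each component, not that $C_i$ is a $(-2)$-curve; for $n\ge 10$ there is no a priori reason such $K_X$-trivial curves must be rational with $C_i^2=-2$. The paper handles \labelcref{item:no_spherical_objects} in the general case exactly as in your first suggestion, by deducing it from \labelcref{item:autoequvialences_are_standard} via the observation (spelled out in the introduction) that a spherical twist $T_S$ sends $S$ to $S[-1]$ but fixes skyscrapers off $\Supp(S)$, hence cannot be standard.
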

	
	By \cite[Cor.~4.4]{favero_reconstruction_and_finiteness_results_for_fm_partners}, for any smooth projective variety $X$ we have that \labelcref{item:autoequvialences_are_standard} implies \labelcref{item:no_fm_partners}.
	Moreover, if $X$ is a smooth projective variety of dimension $\geq 2$ with nontrivial and non-torsion canonical class, then \labelcref{item:autoequvialences_are_standard} implies \labelcref{item:no_spherical_objects}.
	Indeed, arguing as in \cref{lem:spherical_obj_supported_on_curve}, a spherical object $S$ on such a variety $X$ has to be supported on a proper closed subvariety.
	By \cite[Ex.~8.5\,(ii)]{huybrechts_fourier_mukai_transforms_in_algebraic_geometry}, the spherical twist $T_S$ associated to $S$ satisfies $T_S(S) = S[1-\dim X]$ and $T_S(k(x))=k(x)$ for any point $x\in X \setminus \Supp (S)$. Thus, $T_S$ is a non-standard autoequivalence.
	
	Note that \cref{thm:main_thm} follows from \cite{bondal_orlov_reconstruction_of_a_variety_from_the_derived_category_and_groups_of_autoequivalences} if $n\leq 8$ as $X$ is a del~Pezzo surface in this case. In fact, for $n\leq 8$ it is sufficient to require the blown up points to be in general position.
	
	In \cref{sec:9_points} we give an elementary proof of \cref{thm:main_thm} in the case $n=9$ by arguing that for all integral curves $C \subseteq X$ the restriction $K_X\vert_C$ is nonzero in $\CH^1(C)_\bbQ$.
	
	In \cref{se:surfaces_nonneg_kod_dim} we consider blow-ups $X$ of minimal surfaces $Y$ of nonnegative Kodaira dimension.
	In contrast to the case of rational surfaces, $(-2)$-curves on $X$ are strict transforms of $(-2)$-curves on $Y$, see \cref{prop:-2_curves_obj_in_blow_up_nef_canonical}.
	Thus, using \cite{uehara_a_trichotomy_for_the_autoequivalence_groups_on_smooth_projective_surfaces}, we obtain
	\begin{theorem}[\cref{thm:short_version_blow_up_nonneg_kod_dim}]
		\label{thm:thm_2_intro}
		Let $Y$ be a minimal
		surface of 
		nonnegative Kodaira dimension and let $X$ be the blow-up of $Y$ in a nonempty finite set of points.
		Assume $Y$ contains no $(-2)$-curves, e.g.\ $Y$ has Kodaira dimension $1$ and the elliptic fibration of $Y$ has only irreducible fibers.
		Then $\sfD^b(X)$ admits only standard autoequivalences, i.e.
		$$\Aut(\sfD^b(X)) =  \Pic(X) \rtimes \Aut(X) \times \bbZ[1].$$
	\end{theorem}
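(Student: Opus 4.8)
The plan is to deduce the statement from Uehara's trichotomy \cite{uehara_a_trichotomy_for_the_autoequivalence_groups_on_smooth_projective_surfaces} once it is known that $X$ contains no $(-2)$-curve. First I would record the structural features of $X$ that place it in the relevant branch of that trichotomy. Since the blown-up set is nonempty, an exceptional divisor $E$ of $X \to Y$ is a $(-1)$-curve with $K_X \cdot E = -1$; hence $X$ is \emph{not} minimal, $K_X$ is neither nef nor numerically trivial, and $Y$ is the minimal model of $X$. Moreover the Kodaira dimension is a birational invariant, so $\kappa(X) = \kappa(Y) \ge 0$, and in particular $X$ is neither rational nor ruled.

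The crucial step is to verify that $X$ has no $(-2)$-curve. By \cref{prop:-2_curves_obj_in_blow_up_nef_canonical}, every $(-2)$-curve on $X$ is the strict transform of a $(-2)$-curve on $Y$; since $Y$ contains no $(-2)$-curve by hypothesis, neither does $X$. For the explicit class mentioned in the statement, I would check directly that a minimal surface $Y$ with $\kappa(Y) = 1$ whose (Iitaka) elliptic fibration $f \colon Y \to B$ has only irreducible fibers contains no $(-2)$-curve: if $C \subseteq Y$ were one, then $C \cong \bbP^1$ with $C^2 = -2$, and adjunction gives $K_Y \cdot C = 0$; but by the canonical bundle formula $K_Y$ is numerically a nonnegative multiple of the fiber class together with multiple-fiber corrections, and it is not numerically trivial because $\kappa(Y) = 1$, so $K_Y \cdot C = 0$ forces $C$ to be supported in a fiber; as the fibers are irreducible this means $C = f^{-1}(b)_{\mathrm{red}}$ for some $b \in B$, whence $C^2 = 0 \ne -2$, a contradiction.

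With $X$ identified as a non-minimal surface of nonnegative Kodaira dimension having no $(-2)$-curve, I would then invoke the corresponding case of Uehara's classification: for such surfaces $\Aut(\sfD^b(X))$ is generated by the standard autoequivalences together with the spherical twists attached to the $(-2)$-curves of $X$ and their ADE configurations. Since $X$ carries no $(-2)$-curve, the spherical-twist contribution is trivial, and we conclude $\Aut(\sfD^b(X)) = \Pic(X) \rtimes \Aut(X) \times \bbZ[1]$.

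The main obstacle is this last step: one must identify precisely the branch of Uehara's trichotomy that applies to non-minimal surfaces of nonnegative Kodaira dimension, and check that $X$ is not instead one of the exceptional surfaces for which non-standard autoequivalences genuinely occur, such as minimal surfaces with numerically trivial canonical class or surfaces carrying infinitely many $(-2)$-curves; here the non-minimality of $X$ rules out the former possibility and the absence of $(-2)$-curves rules out the latter. A secondary point worth verifying carefully is that \cref{prop:-2_curves_obj_in_blow_up_nef_canonical} indeed accounts for $(-2)$-curves that could be created in the blow-up (for example strict transforms of curves of negative self-intersection passing through a blown-up point), so that the hypothesis "$Y$ has no $(-2)$-curve" genuinely propagates to "$X$ has no $(-2)$-curve".
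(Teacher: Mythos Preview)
Your overall strategy matches the paper's: use \cref{prop:-2_curves_obj_in_blow_up_nef_canonical} to transfer the absence of $(-2)$-curves from $Y$ to $X$, then feed $X$ into Uehara's trichotomy. The verification of the parenthetical example ($\kappa(Y)=1$ with irreducible fibers) is a nice addition that the paper leaves implicit.

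There is, however, a genuine gap in the last step. The hypothesis you need in order to apply \cite[Thm.~1.3]{uehara_a_trichotomy_for_the_autoequivalence_groups_on_smooth_projective_surfaces} is that the Fourier--Mukai support dimension satisfies $N_X=2$, and by \cite[Thm.~1.1]{uehara_a_trichotomy_for_the_autoequivalence_groups_on_smooth_projective_surfaces} this amounts to two conditions: $K_X$ is not numerically trivial \emph{and} $X$ admits no minimal elliptic fibration. You correctly dispose of the first via non-minimality, but your list of ``exceptional surfaces'' to rule out omits the second: nothing you have said excludes the possibility that $X$ itself carries a minimal elliptic fibration, and Uehara's trichotomy is not organised by Kodaira dimension or minimality per se.

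The paper fills this gap with a short direct argument: any surface $S$ with a minimal elliptic fibration has $K_S^2=0$. If $\kappa(Y)\in\{0,1\}$ then $K_Y^2=0$, and since the blow-up is in a nonempty set of points one gets $K_X^2<0$, so $X$ cannot be minimally elliptic. If $\kappa(Y)=2$ then $X$ is of general type and carries no elliptic fibration at all (the paper cites \cite[Ch.~V, Prop.~12.5]{barth_hulek_peters_van_de_ven_compact_complex_surface}). Once this is in place, both hypotheses of Uehara's Theorem~1.3 hold and the conclusion follows exactly as you describe.
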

	As outlined above, \cref{thm:thm_2_intro} implies that such $X$ has no Fourier--Mukai partners and $\sfD^b(X)$ does not contain spherical objects.
	
	In \cref{prop:spherical_obj_in_blow_up_nef_canonical} we characterize spherical objects on blow-ups $X$ of minimal surfaces $Y$ of positive Kodaira dimension: An object in $\sfD^b(X)$ is spherical if and only if it is the pullback of a spherical object in $\sfD^b(Y)$ whose support is disjoint from the exceptional locus of $X \to Y$.
	If $Y$ is a minimal surface of Kodaira dimension $1$ whose elliptic fibration has only irreducible fibers, this characterization combined with the results of \cite{uehara_autoequvialences_of_derived_categories_of_elliptic_surfaces_with_non_zero_kodaira_dimension} gives an alternate proof that $\sfD^b(X)$ does not contain spherical objects, see \cref{rmk:spherical_objects_base_no_spherical_nef_canonical}.
	\begin{ack}
		We thank Hokuto Uehara and Charles Vial for useful comments on an earlier draft of this paper and we thank Gebhard Martin for helpful discussions regarding elliptic surfaces.
		Further, we thank the anonymous referee for helpful comments.
	\end{ack}
	\begin{conventions}
		The term \emph{surface} always refers to a smooth projective $2$-dimensional variety over~$\bbC$.
		For a variety $X$, we denote by $\CH^*(X)$ (resp.\ $\CH_*(X)$) the Chow groups of algebraic cycles modulo rational equivalence with integer coefficients graded by codimension (resp.\ dimension).
		We denote by $\CH^*(X)_\bbQ \coloneqq \CH^*(X) \otimes_\bbZ \bbQ$ the Chow groups with rational coefficients.
		A $(-k)$-curve $C$ in a surface $S$ is an integral smooth rational curve $C$ with self-intersection number $-k$.
		The term ``$n$ general points in $\bbP^2_\bbC$'' means that there exists a Zariski open subset $U\subseteq (\bbP^2_\bbC)^{n}$ such that for any $(p_1, \dots, p_n)\in U$ [...] holds.
		The term ``$n$ very general points in $\bbP^2_\bbC$'' means that there exist countably many Zariski open subset $U_i\subseteq (\bbP^2_\bbC)^{n}$ such that for any $(p_1, \dots, p_n)\in \bigcap_i U_i$ [...] holds.
	\end{conventions}

	\section{Preliminary Observations}
	Let $X$ be a smooth projective variety.
	The support of an object $F \in \sfD^b(X)$ is by definition the closed subvariety
	$$\Supp(F)\coloneqq \bigcup_{i \in \bbZ} \Supp(\mcH^i(F))\subseteq X$$
	endowed with the unique reduced closed subscheme structure.
	If $F$ is a simple object, i.e.\ $\Hom(F,F)=\bbC$, then $\Supp(F)$ is connected; see, e.g., \cite[Lem.~3.9]{huybrechts_fourier_mukai_transforms_in_algebraic_geometry}.
	\begin{definition}
		An object $S \in \sfD^b(X)$ is called \emph{spherical} if 
		\begin{align*}
			\Hom(S, S[i]) =\begin{cases}
				\bbC & \text{if}\; i = 0, \dim X,\\
				0 & \text{else},
			\end{cases}
		\end{align*}
		and $S \otimes \omega_X \cong S$.		
		An object $P \in \sfD^b(X)$ is called \emph{point-like} if
		\begin{align*}
			\Hom(P, P[i]) =\begin{cases}
				0 & \text{if}\; i < 0,\\
				\bbC & \text{if}\; i=0,
			\end{cases}
		\end{align*}
		and $P \otimes \omega_X \cong P$.
	\end{definition}
	Clearly, any shift of a skyscraper sheaf of a point $k(x)[m]$, $x\in X$, is a point-like object. By definition, a spherical object is a point-like object. 
	
	Denote by $p, q\colon X\times X \to X$ the projections and by $\Delta \hookrightarrow  X\times X$ the diagonal embedding.
	If $S$ is a spherical object on $X$, the object $\mcP_S \coloneqq \mathrm{Cone}(q^*S^\vee \otimes^{\mathrm{L}} p^* S \to \mcO_\Delta) \in \sfD^b(X\times X)$ is the Fourier--Mukai kernel of the \emph{spherical twist} $T_S \colon \sfD^b(X) \to \sfD^b(X)$ given by $T_S (-) = \mathrm{R}p_*(\mcP_S \otimes^{\mathrm{L}} q^*(-))$.
	By \cite[Thm.~1.2]{seidel_thomas_braid_group_actions_on_derived_categories_of_coherent_sheaves} a spherical twist is always an autoequivalence of $\sfD^b(X)$.
	
	The condition $P \otimes \omega_X \cong P$ has the following consequence on the support of a point-like object:
	\begin{lemma}\label{lem:spherical_obj_supported_on_curve}
		Let $X$ be a smooth projective positive dimensional variety with $K_X \neq 0$ in $\CH^*(X)_\bbQ$, i.e.\ $\omega_X$ is nontrivial and non-torsion.
		Then any point-like object $P \in \sfD^b(X)$ is supported on a connected proper closed subset.
		
		Moreover, if $X$ is a surface, then either
		\begin{enumerate}
			\item \label{item:case_1} $\Supp(P)$ is a point and $P \cong k(x)[m]$ for some $m \in \bbZ$, $x\in X$, or
			\item \label{item:case_2}$\Supp(P)$ is a, possibly reducible, connected curve $C=\bigcup_{i}C_{i}$ such that $K_X \vert_{ \tilde{C}_{i}} =0$ in $\CH^1(\tilde{C}_i)_\bbQ$, where $C_{i}$ are the irreducible components of $C$ and $\tilde{C}_i \to C_i$ are the normalizations.
		\end{enumerate}
		In particular, in \labelcref{item:case_1} $P$ is not spherical and in \labelcref{item:case_2} we have $K_X \cdot C =0 \in \bbZ$.
	\end{lemma}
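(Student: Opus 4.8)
The plan is to reduce the statement to two elementary facts. First, tensoring by a line bundle is exact, so the isomorphism $P\otimes\omega_X\cong P$ yields $\mcH^i(P)\otimes\omega_X\cong\mcH^i(P)$ for every $i$. Second, on a smooth variety a coherent sheaf $\mcF$ of generic rank $r$ admits a finite locally free resolution, and hence $\det(\mcF\otimes L)\cong\det(\mcF)\otimes L^{\otimes r}$ for every line bundle $L$. Since $P$ is point-like it is simple, so $\Supp(P)$ is connected. Applying the determinant identity with $\mcF=\mcH^i(P)$ and $L=\omega_X$ gives $\omega_X^{\otimes r}\cong\mcO_X$ for $r=\rank\mcH^i(P)$; as $\omega_X$ is non-torsion this forces $r=0$, so $\mcH^i(P)$ does not meet the generic point of $X$. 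Hence $\Supp(P)=\bigcup_i\Supp(\mcH^i(P))$ is a connected \emph{proper} closed subset. (Equivalently, from $\ch(\mcH^i(P))\cdot(1-e^{K_X})=0$ in $\CH^*(X)_\bbQ$ and the fact that $1-e^{K_X}$ is $-K_X$ times a unit, the degree-one part gives $\rank\mcH^i(P)\cdot K_X=0$, so $\rank\mcH^i(P)=0$ because $K_X\neq0$ in $\CH^1(X)_\bbQ$.)

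Assume now $X$ is a surface, so $\Supp(P)$ is either a point or a connected curve. In case (i), suppose $\Supp(P)=\{x\}$ and let $a\le b$ be the extreme degrees in which $P$ has nonzero cohomology. I would invoke the standard $t$-structure identity $\Hom(P,P[a-b])\cong\Hom_{\mcO_X}(\mcH^b(P),\mcH^a(P))$: the nonzero finite-length sheaves $\mcH^a(P),\mcH^b(P)$ at $x$ admit $k(x)$ as a subsheaf and as a quotient, respectively, so this Hom-space is nonzero; since $P$ is point-like, $\Hom(P,P[j])=0$ for $j<0$, which forces $a=b$. Then $P\cong M[-a]$ for a finite-length sheaf $M$ at $x$ with $\End(M)=\Hom(P,P)=\bbC$, and any such $M$ must equal $k(x)$ (otherwise $M$ has $k(x)$ both as a subsheaf and as a quotient, producing an endomorphism that is neither zero nor invertible). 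So $P\cong k(x)[m]$, and this is not spherical because $\Hom(k(x),k(x)[1])\cong T_xX\neq0$, whereas sphericity on a surface requires this group to vanish.

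In case (ii), suppose $\Supp(P)=C=\bigcup_iC_i$ is a connected curve and fix a component $C_i$. As $X$ is a smooth surface, $\iota_i\colon C_i\hookrightarrow X$ is a regular closed immersion, so $Q_i\coloneqq\bfL\iota_i^*P\in\sfD^b(C_i)$ is defined and, by functoriality and monoidality of $\bfL\iota_i^*$, one has $Q_i\otimes\iota_i^*\omega_X\cong Q_i$. At the generic point $\eta$ of $C_i$ --- a codimension-one point of $X$, so $\mcO_{X,\eta}$ is a DVR with residue field $k(\eta)=\mcO_{C_i,\eta}$ --- we get $(Q_i)_\eta\cong P_\eta\lotimes_{\mcO_{X,\eta}}k(\eta)$, which is nonzero by derived Nakayama since $P_\eta\neq0$. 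Thus $\Supp(Q_i)$, a priori contained in $C_i$, contains $\eta$ and hence equals $C_i$; so some cohomology sheaf $\mcG\coloneqq\mcH^j(Q_i)$ has positive generic rank $r\ge1$ and still satisfies $\mcG\otimes\iota_i^*\omega_X\cong\mcG$. Let $\mu_i\colon\tilde C_i\to C_i$ be the normalization and $\nu_i\coloneqq\iota_i\circ\mu_i$. The underived pullback $\mu_i^*\mcG$ is a coherent sheaf of generic rank $r$ on the smooth projective curve $\tilde C_i$ with $\mu_i^*\mcG\otimes\nu_i^*\omega_X\cong\mu_i^*\mcG$, and the determinant identity on $\tilde C_i$ now gives $(\nu_i^*\omega_X)^{\otimes r}\cong\mcO_{\tilde C_i}$, i.e.\ $K_X\vert_{\tilde C_i}=0$ in $\CH^1(\tilde C_i)_\bbQ$. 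Finally $K_X\cdot C=\sum_iK_X\cdot C_i=\sum_i\deg_{\tilde C_i}(\nu_i^*\omega_X)=0$, a torsion line bundle having degree zero.

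The main obstacle is case (ii). A naive Chern-class computation only yields the global identity $c_1(\mcH^j(P))\cdot K_X=0$ in $\CH_0(X)_\bbQ$, which records merely $\sum_i m_i\,[C_i]\cdot K_X$ and allows cancellation among the components, so it does not give the per-component vanishing $K_X\vert_{\tilde C_i}=0$. The remedy is to restrict $P$ to each $C_i$ separately and pull back to its normalization; the two points requiring care are (a) that this restriction stays nonzero along the generic point of $C_i$, which is exactly what derived Nakayama over the DVR $\mcO_{X,\eta}$ supplies, and (b) that only a single cohomology sheaf is pulled back to $\tilde C_i$, so no control on the Tor-dimension of the (possibly singular) normalization morphism is needed. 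In case (i) the only non-formal ingredient is the $t$-structure identity $\Hom(P,P[a-b])\cong\Hom(\mcH^b(P),\mcH^a(P))$.
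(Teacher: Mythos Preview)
Your proof is correct and follows the same core idea as the paper: pass the isomorphism $P\otimes\omega_X\cong P$ to each cohomology sheaf and use a Chern-character/determinant argument to force the rank to vanish, then repeat on each component of the support. Two differences are worth noting. In case~(i) the paper simply cites \cite[Lem.~4.5]{huybrechts_fourier_mukai_transforms_in_algebraic_geometry} and then observes $\chi(k(x),k(x))=0$; your direct argument via the $t$-structure identity $\Hom(P,P[a-b])\cong\Hom(\mcH^b(P),\mcH^a(P))$ is a pleasant self-contained replacement. In case~(ii), however, you take a detour that the paper avoids: rather than forming $\bfL\iota_i^*P$, invoking derived Nakayama at the DVR $\mcO_{X,\eta}$, and then selecting a cohomology sheaf of the restricted complex, the paper just picks a cohomology sheaf $\mcH$ of $P$ on $X$ whose support contains $C_i$ (one exists by definition of $\Supp(P)$), pulls it back underived along $j\colon\tilde C_i\to X$, and runs the Chern-character argument on $\tilde C_i$. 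Ordinary Nakayama at $\eta$ already guarantees $j^*\mcH$ has positive rank, so the derived machinery is unnecessary. Your worry in the final paragraph---that a global Chern-class identity on $X$ only gives $\sum_i m_i(K_X\cdot C_i)=0$---is well taken, but the paper sidesteps it the same way you do, by restricting to one $\tilde C_i$ at a time; it just does so with a lighter touch.
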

	\begin{proof}
		Denote by $\mcH^i(P) \in \Coh X$ the $i$-th cohomology sheaf of $P$.
		Since $\omega_X$ is a line bundle, we have $$\mcH^i(P) \otimes  \omega_X = \mcH^i(P \otimes  \omega_X ) \cong \mcH^i(P),$$ which yields
		$\ch(\mcH^i(P)) \ch(\omega_X) = \ch (\mcH^i(P))$ in $\CH^*(X)_{\bbQ}$.
		If $\mcH^i(P)$ had positive rank, then $\ch(\mcH^i(P))$ would be invertible in $\CH^*(X)_{\bbQ}$, hence $\ch(\omega_X)=0$.
		This contradicts to $K_X$ being non-torsion.
		Hence, all cohomology sheaves $\mcH^i(P)$ have rank zero and thus the generic point of $X$ is not contained in the support of $P$.
		Thus, $\dim \Supp(P) < \dim X$ and $\Supp(P)$ is connected by \cite[Lem.~3.9]{huybrechts_fourier_mukai_transforms_in_algebraic_geometry}.
		
		Assume in addition that $\dim X = 2$.
		If $P$ is supported on a point, then \cite[Lem.~4.5]{huybrechts_fourier_mukai_transforms_in_algebraic_geometry} shows that $P \cong k (x)[m]$ for some $x\in X$ and $m \in \bbZ$.
		In particular, $\chi(k (x)[m], k (x)[m])=0$, so $P$ is not spherical.
		If $\Supp(P)$ is $1$-dimensional, $\Supp(P)$ is a connected reduced, possibly reducible, curve.
		
		Let $C_i \subseteq X$ be an irreducible curve, contained in $\Supp (P)$ and let $\tilde{C_i} \to C_i$ be its normalization.
		Denoting by $j\colon \tilde{C_i} \to C_i \hookrightarrow X$ composition, we obtain by the projection formula
		$$K_X \cdot C_i = j_*j^*K_X \in \CH_0(X).$$
		Let $\mcH$ be a cohomology sheaf of $P$ which has nonzero rank restricted on $C_i$.
		The equality $\ch(\mcH)=\ch(\mcH)\ch(\omega_X)$ on $X$ shows $\ch(j^*\mcH) = \ch(j^*\mcH)\ch(j^*\omega_X)$ on $\tilde{C_i}$. Since $j^*\mcH$ has nonzero rank, this implies that $j^* K_X$ is torsion in $\CH_0(\tilde{C_i})$.
		We conclude that the intersection number $K_X\cdot C_i = \deg (j_*j^*K_X)$ is zero.
	\end{proof}

	\section{Blow-up in $9$ Very General Points}\label{sec:9_points}
	Let $X\to \bbP^2_\bbC$ be the blow-up in the points $p_1, \dots, p_n \in \bbP^2$. We denote by $H \in \Pic(X)$ the class of the pullback of a hyperplane in $\bbP^2$ and by $E_i \in \Pic(X)$ the class of the $(-1)$-curve over the point~$p_i$. Recall that the canonical class of $X$ is given by $K_X = -3H + \sum_{i=1}^n E_i$ and that $\Pic(X)$ is freely generated by $H, E_1,\dots, E_n$. Moreover, the classes $H, E_1,\dots, E_n$ are pairwise orthogonal for the intersection pairing and satisfy $H^2=1$ and $E_i^2=-1$ for $1 \leq i \leq n$.
	The first proof of \cref{thm:main_thm} in the case of $n=9$ points builds on the following observation:
	\begin{proposition}\label{prop:restriction_to_cubic_non_torsion}
		Let $X$ be the blow-up of $\bbP^2_\bbC$ in $9$ very general points. Then $\lvert -K_X \rvert$ is zero-dimensional and the unique member is the strict transform $\tilde{C}$ of the unique smooth cubic curve $C \subseteq \bbP^2_\bbC$ which passes through the $9$ points. Moreover, $K_X \vert_{\tilde{C}} \neq 0$ in $\CH^1(\tilde{C})_\bbQ$.
	\end{proposition}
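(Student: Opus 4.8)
The plan is to prove the two assertions in turn: the first on a fixed nonempty Zariski open $U_0 \subseteq (\bbP^2_\bbC)^9$, and the second on a countable intersection of dense opens inside $U_0$, which is where the hypothesis ``very general'' enters. First I would identify the anticanonical system. Since $-K_X = 3H - \sum_{i=1}^9 E_i$ and $\pi_*\mcO_X = \mcO_{\bbP^2}$, the projection formula identifies $H^0(X,\mcO_X(-K_X))$ with $H^0(\bbP^2,\mcI_{\{p_1,\dots,p_9\}}(3))$, i.e.\ with the cubic forms on $\bbP^2$ vanishing at $p_1,\dots,p_9$. Nine points always impose at most nine conditions on the ten-dimensional space of cubics, so this space is at least one-dimensional, and for general $(p_i)$ the conditions are independent: the locus of nine-tuples lying on a pencil of cubics has dimension at most that of the family of pencils in $|\mcO_{\bbP^2}(3)|\cong\bbP^9$, namely $16 < 18 = \dim(\bbP^2)^9$. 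Hence $|-K_X|$ is a single point for general $(p_i)$, and the associated plane cubic $C$ is moreover smooth (a nonempty open condition). Then $C$ is smooth at each $p_i$, so $\pi^*C = \tilde C + \sum_i E_i$ and the unique section of $\mcO_X(-K_X)$ cuts out $\pi^*C - \sum_i E_i = \tilde C$. Thus on some nonempty open $U_0$ the first assertion holds and $\pi$ restricts to an isomorphism $\tilde C \xrightarrow{\ \sim\ } C$ onto a smooth plane cubic.

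For the second assertion I would compute $K_X\vert_{\tilde C}$ via the isomorphism $\tilde C \cong C$. Restriction gives $H\vert_{\tilde C} = \mcO_C(1)$, and $E_i\vert_{\tilde C} = \mcO_C(p_i)$ because $\tilde C$ meets the $(-1)$-curve $E_i$ transversally in the single point over $p_i$; hence
\[
K_X\vert_{\tilde C} \;\cong\; \mcO_C(p_1+\dots+p_9)\otimes\mcO_C(1)^{\otimes -3},
\]
a line bundle of degree $9-9=0$ on the elliptic curve $C$. Therefore $K_X\vert_{\tilde C}$ vanishes in $\CH^1(\tilde C)_\bbQ$ if and only if this degree-zero class is a torsion element of $\Pic^0(C)$, and it suffices to prove that for very general $(p_i)$ it has infinite order.

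Over $U_0$ one has the universal anticanonical cubic $\mcC \subseteq U_0\times\bbP^2$, a smooth projective family of elliptic curves over $U_0$ with nine tautological sections $\sigma_1,\dots,\sigma_9$, together with the relative line bundle $\mcL \coloneqq \mcO_\mcC\bigl(\textstyle\sum_i\sigma_i\bigr)\otimes\operatorname{pr}_2^*\mcO_{\bbP^2}(1)^{\otimes -3}$, which has relative degree $0$ and restricts on the fibre over $(p_i)$ to $K_X\vert_{\tilde C}$. For each $N\ge 1$ the set $Z_N\subseteq U_0$ on which $\mcL^{\otimes N}$ is fibrewise trivial is Zariski closed: triviality of a degree-zero bundle on an elliptic curve is equivalent to nonvanishing of its $H^0$, so $Z_N$ is cut out by upper semicontinuity of fibrewise $H^0$ along $\mcC\to U_0$ (equivalently, $Z_N$ is the preimage of the $N$-torsion subscheme of $\operatorname{Pic}^0(\mcC/U_0)$ under the section determined by $\mcL$). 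To see $Z_N\ne U_0$ it is enough to produce one nine-tuple lying in $U_0\setminus Z_N$ for all $N$ at once. Fix any smooth plane cubic $C_0$ and work inside $C_0^9$: the nine-tuples on $C_0$ that fail to be in general position in $\bbP^2$ form a proper closed subset (for general nine points of $C_0$ no second cubic passes through them, e.g.\ by Cayley--Bacharach), while the nine-tuples with $\mcO_{C_0}(p_1+\dots+p_9)\otimes\mcO_{C_0}(1)^{\otimes -3}$ torsion form a countable union of proper closed subsets, since $(p_i)\mapsto\mcO_{C_0}(p_1+\dots+p_9-3H_{C_0})$ is a morphism onto $\Pic^0(C_0)$. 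As $\bbC$ is uncountable, $C_0^9$ is not covered by these, so there is $(p^0_i)\in C_0^9$ in general position with $\mcO_{C_0}(p^0_1+\dots+p^0_9)\otimes\mcO_{C_0}(1)^{\otimes -3}$ of infinite order; then $(p^0_i)\in U_0$ with unique anticanonical cubic $C_0$, and $(p^0_i)\notin Z_N$ for all $N$. Hence each $U_N\coloneqq U_0\setminus Z_N$ is a nonempty dense open, and for $(p_i)$ in the very general set $\bigcap_{N\ge 1}U_N$ both assertions of the proposition hold.

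The computations in the first two paragraphs are routine; the substance is in the third. I expect the main obstacle to be the interplay between ``general'' and ``very general'': the torsion locus $\bigcup_N Z_N$ is a countable union of divisors in $U_0$ which is Zariski dense, so one genuinely must impose countably many conditions, and one has to be somewhat careful in making sense of the universal anticanonical cubic $\mcC\to U_0$ and its relative Picard scheme precisely enough to conclude that each $Z_N$ is Zariski closed.
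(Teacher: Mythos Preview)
Your argument is correct and follows essentially the same route as the paper: construct the universal anticanonical cubic over the open locus $U_0$, show that the torsion locus is a countable union of proper closed subsets, and verify properness by restricting to nine-tuples on a fixed smooth cubic. The only notable difference is that the paper packages the closedness of the torsion loci by citing \cite[Lem.~3.2]{voisin_chow_rings_decomp_of_diag_and_top_in_families}, whereas you obtain it directly from upper semicontinuity of $h^0$ for $\mcL^{\otimes N}$ along the family $\mcC\to U_0$; your version is slightly more self-contained, while the citation absorbs the bookkeeping of the relative Picard scheme you flag in your last paragraph.
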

	\begin{proof}
		Let $U\subseteq (\bbP^2_\bbC)^9$ be the Zariski open subset parameterizing tuples of points $(p_1, \dots, p_9)$ such that $p_i \neq p_j$ for all $i\neq j$ and such that there is a unique smooth irreducible cubic $C \subseteq \bbP^2_\bbC$ passing through $p_1, \dots, p_9$.
		If $C\subseteq \bbP^2_\bbC$ is such a cubic, then $U \cap C^9$ is a nonempty Zariski open subset of $C^9$.
		Recall that for a fixed point $p_0 \in C$ the map $C \to \CH^1(C)$ sending $p \mapsto [p]-[p_0]$ is surjective.
		Thus, denoting $H$ for class of the hyperplane in $\bbP^2_\bbC$, the map $C^9 \to \CH^1(C)$ sending $(p_1, \dots, p_9) \mapsto 3H\vert_C - [p_1]-\dots -[p_9]$ is surjective.
		Since $\Pic^0(C)$ has only countably many torsion points, the set
		\begin{equation*}
			Z\coloneqq \left \{(p_1, \dots, p_9) \in U  \mid 3H\vert_C - \sum_{i=1}^9 [p_i] = 0 
			\; \text{in} \; \CH^1(C)_\bbQ \; \text{where}\; p_1, \dots, p_9 \in C\; \text{and}\; C \in  \lvert 3H \rvert \right \}
		\end{equation*}
		is a proper subset of $U$. To conclude the lemma, it is enough to show that $Z$ is a countable union of closed algebraic subsets of $U$. For this, we rewrite $Z$ as follows:
		Let
		\begin{align*}
			\mcC \coloneqq \left \{ (p_1, \dots, p_9, f, p) \in U \times \bbP(H^0(\bbP^2_\bbC, \mcO_{\bbP^2_\bbC}(3))) \times \bbP^2_\bbC \mid f(p_1) = \dots =f(p_9) =f(p)=0 \right \} \xrightarrow{\pi} U \\
			(p_1, \dots, p_9, f, p)  \mapsto (p_1, \dots, p_9)
		\end{align*}
		be the family of cubic curves through the $9$ points $(p_1, \dots, p_9)$.
		The map $\pi$ admits sections $s_i\colon U \to \mcC$ given by $(p_1, \dots, p_9) \mapsto (p_1, \dots, p_9, f, p_i)$ where $f$ is the unique cubic polynomial vanishing at $p_1, \dots, p_9$.
		We obtain cycles $\mathcal{P}_i \coloneqq [s_i(U)] \in \CH^1(\mcC)$ such that the restriction $\mathcal{P}_i \vert_{C_{\bar{p}}}$ to the fiber $C_{\bar{p}} = \pi^{-1}(\bar{p})$ over a point $\bar{p}= (p_1, \dots, p_9)$ is $[p_i] \in \CH^1(C_{\bar{p}})$.
		Let $\mcH \in \CH^1(\mcC)$ be the pullback of the hyperplane class on $\bbP^2_\bbC$.
		Then $3\mcH-\sum_i \mathcal{P}_i$ is a cycle in $\CH^1(\mcC)$ and the set $Z$ is the locus of points $\bar{p} \in U$ such that $(3\mcH-\sum_i \mathcal{P}_i)\vert_{C_{\bar{p}}}$ vanishes in $\CH^1(C_{\bar{p}})_\bbQ$. By \cite[Lem.~3.2]{voisin_chow_rings_decomp_of_diag_and_top_in_families}, $Z$ is a countable union of closed algebraic subvarieties.
	\end{proof}
	
	\begin{lemma}\label{lem:restriction_nonzero_for_all_curves}
		Let $X$ be the blow-up of $\bbP^2_\bbC$ in $n\leq 8$ general points or $n=9$ very general points. Then for any integral curve $C\subseteq \bbP^2_\bbC$, $K_X\vert_{\tilde{C}} \neq 0$ in $\CH^1(\tilde{C})_\bbQ$, where $\tilde{C}$ is the normalization of the strict transform of $C$.
	\end{lemma}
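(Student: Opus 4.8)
The plan is to compare $K_X$ with each integral curve via the single intersection number $K_X\cdot C'$, where $C'\subseteq X$ denotes the strict transform of $C$. Since the normalization $\tilde{C}\to C'$ is birational, $\deg(K_X\vert_{\tilde{C}})=K_X\cdot C'$, so whenever $K_X\cdot C'\neq 0$ the class $K_X\vert_{\tilde{C}}$ is already nonzero in $\CH^1(\tilde{C})$, a fortiori in $\CH^1(\tilde{C})_\bbQ$, and there is nothing to prove. Hence all the content lies in the integral curves $C$ with $K_X\cdot C'=0$. For $n\le 8$ and points in general position this case does not arise at all: recall that $X$ is then a del~Pezzo surface, so $-K_X$ is ample and $K_X\cdot C'<0$ for every integral curve $C'\subseteq X$.

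Assume now $n=9$ with the nine points very general. Writing $C'=dH-\sum_{i=1}^{9}m_iE_i$ with $d=\deg C$ and $m_i=\operatorname{mult}_{p_i}(C)$, one has $K_X\cdot C'=\sum_i m_i-3d$; if this is nonzero we are done, so assume $\sum_i m_i=3d$. Let $S\subseteq X$ be the strict transform of the unique cubic through the nine points, which by \cref{prop:restriction_to_cubic_non_torsion} is a smooth elliptic curve, is the unique member of $\lvert-K_X\rvert$, and satisfies $K_X\vert_S\neq 0$ in $\CH^1(S)_\bbQ$. Since $[S]=-K_X$, we get $S\cdot C'=-K_X\cdot C'=0$, and as $S$ is irreducible this forces either $C'=S$ — in which case $C$ is that cubic, $\tilde{C}=S$, and we conclude directly by \cref{prop:restriction_to_cubic_non_torsion} — or else $C'\cap S=\emptyset$.

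The remaining step, which I expect to be the only genuine obstacle, is to show that no integral curve $C'$ can be disjoint from $S$. Given such a $C'$, adjunction and $K_X\cdot C'=0$ give $C'^2=2p_a(C')-2\ge -2$; the value $-2$ is impossible since $X$ contains no $(-2)$-curve by \cref{prop:negative_curves_de_fernex}, and $-1$ is impossible by parity, so $C'^2\ge 0$. If $C'^2>0$, then $S$ is a nonzero class in $(C')^{\perp}$, so the Hodge index theorem forces $S^2<0$, contradicting $S^2=(-K_X)^2=K_X^2=0$; hence $C'^2=0$, i.e.\ $\sum_i m_i^2=d^2$. Combined with $\sum_i m_i=3d$, the Cauchy--Schwarz inequality $(\sum_{i=1}^{9}m_i)^2\le 9\sum_{i=1}^{9}m_i^2$ becomes an equality, so all $m_i$ coincide and $C'=m(3H-\sum_i E_i)=-mK_X$ for an integer $m\ge 1$. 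Restricting the isomorphism $\mcO_X(C')\cong\mcO_X(-mK_X)$ to $S$ and using $C'\cap S=\emptyset$ yields $\mcO_S(-mK_X)\cong\mcO_X(C')\vert_S\cong\mcO_S$, so $K_X\vert_S$ is $m$-torsion in $\Pic(S)=\CH^1(S)$ and thus vanishes in $\CH^1(S)_\bbQ$, contradicting \cref{prop:restriction_to_cubic_non_torsion}. This closes the last case; the crux is the observation that disjointness from $S$ propagates — through adjunction, the Hodge index theorem, and Cauchy--Schwarz — into the false statement that the non-torsion class $K_X\vert_S$ is torsion.
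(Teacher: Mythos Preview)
Your proof is correct, but for $n=9$ it follows a different route than the paper and trades one external input for another. The paper argues purely via the lattice: Hodge index makes $K_X^\perp$ negative semi-definite with radical $\bbZ K_X$, so any class $v\in K_X^\perp$ with $v^2=0$ is $mK_X$; then \cite[Thm.~2.5]{ciliberto_miranda_linear_systems_of_plane_curves_with_base_points_of_equal_mult} gives $\dim\lvert -mK_X\rvert=0$ with unique member $mS$, forcing an integral $C'$ in that class to equal $S$, and \cref{prop:restriction_to_cubic_non_torsion} finishes. You instead exploit the geometry of $S$: disjointness of $C'$ from $S$ plus $[C']=-mK_X$ makes $\mcO_X(-mK_X)\vert_S\cong\mcO_S$, contradicting \cref{prop:restriction_to_cubic_non_torsion}. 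This restriction trick is a nice replacement for the Ciliberto--Miranda dimension count, and it is arguably more in keeping with the theme of \cref{prop:restriction_to_cubic_non_torsion}.

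Two comments. First, your Cauchy--Schwarz computation is unnecessary: once you know $C'^2=0$ and $K_X\cdot C'=0$, the Hodge-index statement that the radical of $K_X^\perp$ is $\bbZ K_X$ already yields $[C']=-mK_X$ without writing coordinates. Second, and more substantively, to exclude $C'^2=-2$ you invoke \cref{prop:negative_curves_de_fernex}. That result is the engine of \cref{sec:general_case}, and the whole point of \cref{sec:9_points} is to give an argument for $n=9$ that does \emph{not} pass through de~Fernex. The paper's proof avoids citing it (remaining terse on the $(-2)$-curve case and leaning on the very-general hypothesis via the cited dimension result). If you want your argument to stay within the spirit of \cref{sec:9_points}, you should replace the appeal to \cref{prop:negative_curves_de_fernex} by an elementary reason why no $(-2)$-curve can be disjoint from $S$ for very general points---for instance, your own restriction idea applied to a putative $(-2)$-class does the job once combined with a countability argument as in \cref{prop:restriction_to_cubic_non_torsion}.
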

	\begin{proof}
		If $n\leq 8$, then $-K_X$ is ample and thus $-K_X \cdot \tilde{C}>0$ for every integral curve $\tilde{C} \subseteq X$. In particular, $ K_X\vert_{\tilde{C}} \neq 0$ since $\deg(-K_X\vert_{\tilde{C}}) = -K_X \cdot \tilde{C}>0$.
		
		If $n=9$, then $K_X^2=0$. It follows from the Hodge index theorem that the lattice $K_X^\perp \coloneqq \{v \in \Pic(X) \mid K_X \cdot v =0 \}$ is negative semi-definite. Furthermore, any class $v \in K_X^\perp$ with $v^2=0$ is of the form $v=mK_X$ for some $m\in \bbZ$.
		For $m>0$, $mK_X$ is not effective and for $m<0$ the linear system $\lvert mK_X \rvert$ is zero-dimensional, see, e.g., \cite[Thm.~2.5]{ciliberto_miranda_linear_systems_of_plane_curves_with_base_points_of_equal_mult}.
		Its unique member is the (multiple of the) strict transform of the unique smooth cubic $C$ through the $9$ blown up points.
		We conclude by \cref{prop:restriction_to_cubic_non_torsion} that $K_X\vert_{\tilde{C}} \neq 0$ in $\CH^1(\tilde{C})_\bbQ$. 
	\end{proof}
	
	With the above \cref{lem:restriction_nonzero_for_all_curves}, we can prove \cref{thm:main_thm} in case of $n=9$ points as follows:
	
	\begin{proof}[Proof of \cref{thm:main_thm}\,\labelcref{item:no_spherical_objects} for $n\leq 9$ points]
		Assume for contradiction that $S \in \sfD^b(X)$ is a spherical object.
		By \cref{lem:spherical_obj_supported_on_curve}, $S$ is supported on a connected curve $C = \bigcup_i C_i$ such that $K_X \vert_{C_i}=0 \in \CH^1(C_i)_\bbQ$ for all irreducible components $C_i$ of $C$.
		By \cref{lem:restriction_nonzero_for_all_curves} such curves $C_i$ do not exist.
	\end{proof}
	
	\begin{proof}[Proof of \cref{thm:main_thm}\,\labelcref{item:no_fm_partners,item:autoequvialences_are_standard} for $n\leq 9$ points]
		Let $\phi \colon \sfD^b(Y) \to \sfD^b(X)$ be an equivalence.
		For any point $y\in Y$ the skyscraper sheaf $k(y)$ satisfies $k(y) \otimes \omega_Y \cong k(y)$ and thus $\phi(k(y)) \otimes \omega_X \cong \phi(k(y))$.
		Since $\Hom (k(y), k(y)[i]) = \Hom(\phi(k(y)), \phi(k(y))[i])$ for all $i \in \bbZ$, $\phi(k(x))$ is a point-like object.
		By \cref{lem:spherical_obj_supported_on_curve}, $\Supp(\phi(k(y)))$ is either a point or $\Supp(\phi(k(y))) =\bigcup_i C_i$, where each $C_i$ is an integral curve with $K_X \vert_{C_i} = 0 \in \CH^1(C_i)_\bbQ$. By \cref{lem:restriction_nonzero_for_all_curves} such curves $C_i$ do not exist.
		Hence, $\phi(k(y))$ is supported on a point $x \in X$ and $\phi(k(y))= k(x)[m]$ for some $m \in \bbZ$.
		Moreover, by \cite[Cor.~6.14]{huybrechts_fourier_mukai_transforms_in_algebraic_geometry} the locus of $y\in Y$ such that $\phi \circ [-m](k(y))$ is a skyscraper sheaf is open.
		Since $Y$ is connected, this locus is the whole of $Y$, which shows that the shift $m$ in $\phi(k(y))=k(x)[m]$ is independent of $y\in Y$.
		Thus $\phi \circ [-m]$ sends skyscraper sheaves to skyscraper sheaves and \cite[\S\,3.3]{bridgeland_maciocia_fourier_mukai_transforms_for_quotient_varieties} (or \cite[Cor.~5.23]{huybrechts_fourier_mukai_transforms_in_algebraic_geometry}) shows that $\phi \circ [-m] = f_*(\mcL \otimes -)$ for some line bundle $\mcL \in \Pic(Y)$ and isomorphism $f \colon Y \to X$.
		This proves \labelcref{item:no_fm_partners} and shows that in the case $Y =X$ the autoequivalence $\phi$ is a standard autoequivalence. Thus, \labelcref{item:autoequvialences_are_standard} follows.
	\end{proof}
	
	\begin{remark}
		If one could prove the conclusion of \cref{lem:restriction_nonzero_for_all_curves} for blow-ups $X$ of $\bbP^2_\bbC$ in $n>9$ very general points, then \cref{thm:main_thm} would follow by the same arguments as used above in the case of $n\leq 9$ points.
	\end{remark}

	\section{Proof of the General Case}\label{sec:general_case}
	In the general case of \cref{thm:main_thm}\,\labelcref{item:autoequvialences_are_standard,item:no_fm_partners} follow from the work of Uehara \cite[Thm.~1.1, Thm.~1.3]{uehara_a_trichotomy_for_the_autoequivalence_groups_on_smooth_projective_surfaces}, Kawamata \cite{kawamata_d_equivalence_and_k_equivalence}, and the following result of de~Fernex:
	\begin{proposition}[{\cite[Prop.~2.4]{de_fernex_negative_curves_on_very_general_blow_ups_of_p2}}]\label{prop:negative_curves_de_fernex}
		Let $X$ be the blow-up of $\bbP^2_\bbC$ in a finite set of points in very general position.
		If $C\subseteq X$ is an integral rational curve with $C^2<0$, then $C$ is a $(-1)$-curve, that is a smooth rational curve of self-intersection $-1$.
	\end{proposition}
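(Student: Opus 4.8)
My plan is to reduce, via the adjunction formula, to a non-existence statement for rational plane curves through very general points with prescribed multiplicities, and then to attack that statement by a degeneration argument. Write the class of $C$ as $dH-\sum_{i=1}^{n}m_{i}E_{i}$ with $d,m_{i}\ge 0$. If $d=0$ then $C$ is one of the exceptional curves $E_{i}$, which is a $(-1)$-curve; so assume $d\ge 1$, in which case $C$ is the strict transform of an integral plane curve $\bar C$ of degree $d$ with $\operatorname{mult}_{p_{i}}\bar C=m_{i}$, and $C$ rational means $\bar C$ rational. Adjunction reads $C^{2}+K_{X}\cdot C=2p_{a}(C)-2\ge -2$. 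If $K_{X}\cdot C\le -1$, then together with $C^{2}\le -1$ this forces $C^{2}=K_{X}\cdot C=-1$ and $p_{a}(C)=0$, so $C$ is a smooth rational curve of self-intersection $-1$; thus only the case $K_{X}\cdot C\ge 0$, i.e.\ $\sum_{i}m_{i}\ge 3d$, remains. I record the two further constraints carried by $C$: the inequality $C^{2}<0$ gives $\sum_{i}m_{i}^{2}>d^{2}$, and rationality of $\bar C$ (its $\delta$-invariant at $p_{i}$ is at least $\binom{m_{i}}{2}$, while $p_{a}(\bar C)=\binom{d-1}{2}$) gives $\sum_{i}m_{i}(m_{i}-1)\le(d-1)(d-2)$.

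Next I would spread out. There are only countably many classes $\gamma=dH-\sum m_{i}E_{i}$ with $d\ge 1$ and $\sum m_{i}\ge 3d$; for each, the set $\mcI_{\gamma}=\{(\bar C,(p_{i}))\mid \bar C\subseteq\bbP^{2}_{\bbC}\text{ integral rational of degree }d,\ \operatorname{mult}_{p_{i}}\bar C\ge m_{i}\}\subseteq|\mcO_{\bbP^{2}_{\bbC}}(d)|\times(\bbP^{2}_{\bbC})^{n}$ is constructible, and its image $Z_{\gamma}$ is the locus of configurations whose blow-up carries an integral rational curve of class $\gamma$. A very general configuration lies outside $\bigcup_{\gamma}\overline{Z_{\gamma}}$ as soon as each $\overline{Z_{\gamma}}$ is a proper subvariety, so it suffices to show that no $\mcI_{\gamma}$ dominates $(\bbP^{2}_{\bbC})^{n}$. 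If one did, then by properness of the relative Hilbert scheme of the universal blow-up over the space of distinct configurations I could take flat limits of curves of class $\gamma$ along any degeneration of the points; since multiplicity is upper semicontinuous in flat families, every configuration $(p^{0}_{1},\dots,p^{0}_{n})$ of distinct points would then carry an effective divisor of class $\gamma$, whose image in $\bbP^{2}_{\bbC}$ is an effective divisor $\bar C_{0}$ of degree $d$ with $\operatorname{mult}_{p^{0}_{i}}\bar C_{0}\ge m_{i}$.

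The technical heart is then to produce a configuration for which no such $\bar C_{0}$ exists. I would specialize to general points $p^{0}_{1},\dots,p^{0}_{n}$ lying on a fixed smooth conic $Q\subseteq\bbP^{2}_{\bbC}$. If $Q\not\subseteq\bar C_{0}$ then Bézout gives $2d=\deg(\bar C_{0}\cap Q)\ge\sum_{i}\operatorname{mult}_{p^{0}_{i}}\bar C_{0}\ge\sum_{i}m_{i}\ge 3d$, which is impossible; hence $Q\subseteq\bar C_{0}$, and writing $\bar C_{0}=aQ+R$ with $a\ge 1$ maximal, so that $R$ is effective with $Q\not\subseteq R$, one gets $\deg R=d-2a$, $\operatorname{mult}_{p^{0}_{i}}R\ge m_{i}-a$, and $\sum_{i}(m_{i}-a)^{+}\le 2(d-2a)$ by intersecting $R$ with $Q$. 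Organizing this peeling-off so that it ends in a contradiction in all cases — combining the intersection bounds produced at each stage with the global constraints $\sum m_{i}\ge 3d$, $\sum m_{i}^{2}>d^{2}$, $\sum m_{i}(m_{i}-1)\le(d-1)(d-2)$, and, where a single conic is not enough, iterating with further low-degree specializations and an induction on $d$ — is precisely the technical core of de~Fernex's proof \cite{de_fernex_negative_curves_on_very_general_blow_ups_of_p2}, and it is the step I expect to be by far the most delicate. The adjunction reduction and the spreading-out are routine by comparison.
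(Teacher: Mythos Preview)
The paper does not give its own proof of this proposition: it is quoted verbatim as a result of de~Fernex and is used as a black box in the proof of \cref{thm:main_thm}\,\labelcref{item:autoequvialences_are_standard,item:no_fm_partners}. There is therefore nothing in the present paper to compare your argument to.

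As for the sketch itself, the adjunction reduction is correct and standard: from $C^{2}+K_{X}\cdot C=2p_{a}(C)-2\ge-2$ together with $C^{2}\le-1$ one indeed sees that $K_{X}\cdot C\le-1$ forces $C^{2}=K_{X}\cdot C=-1$ and $p_{a}(C)=0$, hence $C$ is a $(-1)$-curve. The spreading-out and semicontinuity step is also routine, and you correctly note that the flat limit need only be an effective divisor with the prescribed multiplicities, not an integral rational curve. Where your outline becomes vague is exactly where you say it does: the ``peeling-off'' of conics together with the three numerical constraints does not, as stated, terminate in a contradiction for all admissible $(d;m_{1},\dots,m_{n})$, and you have deferred the genuine combinatorial/degeneration work to the citation. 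That is honest, but it means your write-up is a reduction to de~Fernex's argument rather than an independent proof; if you intend the latter, the missing part is precisely the careful choice of specialization and the inductive bookkeeping that makes the Bézout bounds close up.
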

	
	\begin{proof}[Proof of \cref{thm:main_thm}\,\labelcref{item:autoequvialences_are_standard,item:no_fm_partners}]

		Recall, e.g.\ from \cite[Prop.~2.2]{cantant_dolgachev_rational_surfaces_with_a_large_group_of_automorphisms}, that if $Y$ is a rational surface admitting a minimal elliptic fibration, then $Y$ can be obtained from $\bbP^2_\bbC$ by blowing up $9$, possibly infinitely near, points and, for some $m>0$, the linear system $\lvert -mK_Y\rvert$ is a pencil.
		Hence, if $X$ is the blow-up of $\bbP^2_\bbC$ in a finite set of points in very general position, then $X$ admits no minimal elliptic fibration.
		Indeed, this is clear if the number of blown up points is different from $9$.
		In the case of $9$ blown up points the linear system $\lvert -mK_X\rvert$ is zero-dimensional for any $m>0$, so it is not a pencil.
		By \cite[Thm.~1.6]{kawamata_d_equivalence_and_k_equivalence}, a non-minimal surface admits nontrivial Fourier--Mukai partners only if it admits a minimal elliptic fibration. Hence, \cref{thm:main_thm}\,\labelcref{item:no_fm_partners} follows.
		
		Let $Y$ be any surface and let $\Phi_P \colon \sfD^b(Y) \to \sfD^b(Y)$ be an autoequivalence with Fourier--Mukai kernel $P\in \sfD^b(Y\times Y)$.
		We denote by $\mathrm{Comp}(\Phi_P)$ the set of irreducible components of $\Supp(P) \hookrightarrow Y \times Y$ and by
		$$N_Y \coloneqq \max \{\dim W \mid W \in \mathrm{Comp}(\Phi_P), \Phi_P\in \Aut(\sfD^b(Y))\}$$
		the \emph{Fourier–Mukai support dimension} of $Y$.
		By Uehara's classification \cite[Thm.~1.1]{uehara_a_trichotomy_for_the_autoequivalence_groups_on_smooth_projective_surfaces}, the equality $N_Y=2$ is equivalent to $Y$ admitting no minimal elliptic fibration and $K_Y$ being not numerically equivalent to zero.
		Hence, for $X$ the blow-up of $\bbP^2_\bbC$ in a finite set of points in very general position we have $N_X=2$.
		
		If $Y$ is a surface with $N_Y = 2$ such that the union of all $(-2)$-curves in $Y$ forms a disjoint union of configurations of type $A$, then, by \cite[Thm.~1.3]{uehara_a_trichotomy_for_the_autoequivalence_groups_on_smooth_projective_surfaces}, $\Aut(\sfD^b(Y))$ is generated by standard autoequivalences and spherical twists.
		For $X$ the blow-up of $\bbP^2_\bbC$ in a finite set of points in very general position, de~Fernex' \cref{prop:negative_curves_de_fernex} shows that $X$ contains no $(-2)$-curve.
		Hence, \cref{thm:main_thm}\,\labelcref{item:autoequvialences_are_standard} follows.     
	\end{proof}
	
	\section{Surfaces of Nonnegative Kodaira Dimension}
	\label{se:surfaces_nonneg_kod_dim}
	\subsection{Autoequivalences}
	In contrast to the case of negative Kodaira dimension, blowing up points in arbitrary position on minimal surfaces of nonnegative Kodaira dimension does not give rise to new $(-2)$-curves.
	\begin{proposition}\label{prop:-2_curves_obj_in_blow_up_nef_canonical}
		Let $Y$ be a minimal surface of nonnegative Kodaira dimension and let $p \colon X \to Y$ be the blow-up of $Y$ in a set of points $p_1, \dots, p_n \in Y$.
		Then every $(-2)$-curve $C$ in $X$ is the strict transform of a $(-2)$-curve $C_0$ in $Y$ such that $p_i \notin C_0$ for $1\leq i \leq n$.
	\end{proposition}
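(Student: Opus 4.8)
The plan is to combine the adjunction formula with the fact that a minimal surface of nonnegative Kodaira dimension has nef canonical bundle. Write $p\colon X\to Y$ as the blow-up in the distinct points $p_1,\dots,p_n$ with exceptional $(-1)$-curves $E_1,\dots,E_n$, so that $K_X = p^*K_Y + \sum_{i=1}^{n} E_i$ with the $E_i$ pairwise disjoint. First I would record that, since $Y$ is minimal with $\kappa(Y)\ge 0$, the canonical class $K_Y$ is nef; hence $p^*K_Y$ is nef on $X$.

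Now let $C\subseteq X$ be a $(-2)$-curve. Adjunction gives $K_X\cdot C = -2 - C^2 = 0$. Since $C^2 = -2 \neq -1 = E_i^2$, the curve $C$ differs from every $E_i$, so $E_i\cdot C\ge 0$ for all $i$, while $p^*K_Y\cdot C\ge 0$ by nefness. As these nonnegative integers sum to $K_X\cdot C = 0$, each of them vanishes; in particular $E_i\cdot C = 0$ for all $i$, which forces $C$ to be disjoint from the exceptional locus $\bigcup_i E_i$.

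Consequently $p$ restricts to an isomorphism of $C$ onto its image $C_0\coloneqq p(C)$, which (by properness of $C$) is a closed, smooth, rational curve contained in $Y\setminus\{p_1,\dots,p_n\}$, and $C$ is the strict transform of $C_0$. Since $C_0$ avoids the blown-up points we have $p^*C_0 = C$, whence $C_0^2 = (p^*C_0)^2 = C^2 = -2$; thus $C_0$ is a $(-2)$-curve in $Y$ with $p_i\notin C_0$, as desired.

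The argument is entirely formal once one knows $K_Y$ is nef, so I do not anticipate a genuine obstacle; the only nontrivial input is this nefness, which is part of the Enriques--Kodaira classification of minimal surfaces of nonnegative Kodaira dimension. (It is essential here that the $p_i$ lie on $Y$ itself, making $K_X - p^*K_Y = \sum_i E_i$ reduced with the $E_i$ disjoint: for infinitely near centres the statement would fail, as a strict transform of an exceptional $(-1)$-curve can itself be a $(-2)$-curve.)
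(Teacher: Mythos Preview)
Your proof is correct and follows essentially the same route as the paper: both combine adjunction to get $K_X\cdot C=0$, use nefness of $K_Y$ to force every summand in $K_X\cdot C=(p^*K_Y)\cdot C+\sum_i E_i\cdot C$ to vanish, and conclude that $C$ misses the exceptional locus and descends to a $(-2)$-curve on $Y$. The only cosmetic difference is that the paper phrases the vanishing via the multiplicities $m_i$ of $C_0$ at the $p_i$ and recovers $C_0^2=-2$ from adjunction on $Y$, whereas you intersect directly on $X$ and pull back $C_0$; these are equivalent computations.
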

	\begin{proof}
		We denote by $E_i$ the exceptional divisor over the $i$-th blown up point $p_i$ for $1 \leq i \leq n$.
		Let $C \subseteq X$ be a $(-2)$-curve. By adjunction, we have
		$$0=g(C) = 1 + \frac{1}{2}(C^2 + C \cdot K_X),$$
		where $g(C)$ denotes the geometric genus of $C$.
		Thus, $C \cdot K_X =0$.
		Further, since $C$ is not one of the exceptional curves $E_i$, $C$ is the strict transform of a curve $C_0 \subseteq Y$.
		We have
		$$0 = C \cdot K_X  = C_0 \cdot K_Y + \sum_{i=1}^n m_i,$$
		where $m_i$ is the multiplicity of $C_0$ at $p_i$.
		Since $K_Y$ is nef, each of the $m_i$ is zero, in other words $p_i \notin C_0$ for $1 \leq i \leq n$.
		We conclude that $C_0$ is a smooth rational curve with $K_Y \cdot C_0=0$, hence, by adjunction, a $(-2)$-curve.
	\end{proof}
	As a consequence of \cref{prop:-2_curves_obj_in_blow_up_nef_canonical} and \cite{uehara_a_trichotomy_for_the_autoequivalence_groups_on_smooth_projective_surfaces}, we obtain the following
	\begin{theorem}\label{thm:short_version_blow_up_nonneg_kod_dim}
		Let $Y$ be a minimal
		surface of 
		nonnegative Kodaira dimension and let $X$ be the blow-up of $Y$ in a nonempty finite set of points.
		Assume $Y$ contains no $(-2)$-curves, e.g.\ $Y$ has Kodaira dimension $1$ and the elliptic fibration of $Y$ has only irreducible fibers.
		Then $\sfD^b(X)$ admits only standard autoequivalences, i.e.
		$$\Aut(\sfD^b(X)) =  \Pic(X) \rtimes \Aut(X) \times \bbZ[1].$$
	\end{theorem}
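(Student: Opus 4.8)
The plan is to deduce the statement from Uehara's classification \cite[Thm.~1.1, Thm.~1.3]{uehara_a_trichotomy_for_the_autoequivalence_groups_on_smooth_projective_surfaces}, along the lines of the proof of \cref{thm:main_thm}\,\labelcref{item:autoequvialences_are_standard} in \cref{sec:general_case}, with \cref{prop:-2_curves_obj_in_blow_up_nef_canonical} playing the role of \cref{prop:negative_curves_de_fernex}. Two inputs have to be supplied: that the Fourier--Mukai support dimension $N_X$ equals $2$, so that \cite[Thm.~1.1, Thm.~1.3]{uehara_a_trichotomy_for_the_autoequivalence_groups_on_smooth_projective_surfaces} applies in the form used in \cref{sec:general_case}, and that $X$ contains no $(-2)$-curve.

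To get $N_X=2$ one must verify, by \cite[Thm.~1.1]{uehara_a_trichotomy_for_the_autoequivalence_groups_on_smooth_projective_surfaces}, that $K_X$ is not numerically trivial and that $X$ admits no minimal (relatively minimal) elliptic fibration. The first is immediate: for any exceptional $(-1)$-curve $E$ of the blow-up $p\colon X\to Y$ one has $K_X\cdot E=-1$ by adjunction, so $K_X\not\equiv 0$. For the second I would argue as follows. Since $\kappa(X)=\kappa(Y)\ge 0$, the surface $X$ is neither rational nor ruled; on the other hand $X$ is not a minimal surface, being the blow-up of $Y$ in a nonempty set of points. But a relatively minimal elliptic surface of nonnegative Kodaira dimension is a minimal surface: Kodaira's canonical bundle formula presents its canonical class, up to numerical equivalence, as a nonnegative multiple of a fibre, hence as a nef class, which is incompatible with the presence of a $(-1)$-curve. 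Consequently $X$ carries no minimal elliptic fibration and $N_X=2$. This replaces the argument via \cite[Prop.~2.2]{cantant_dolgachev_rational_surfaces_with_a_large_group_of_automorphisms}, which is specific to rational surfaces, and it is the one genuinely new point; I expect it to be the step requiring the most care.

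With $N_X=2$ in hand, \cref{prop:-2_curves_obj_in_blow_up_nef_canonical} shows that every $(-2)$-curve in $X$ is the strict transform of a $(-2)$-curve in $Y$; as $Y$ has none by hypothesis, neither does $X$. The union of $(-2)$-curves in $X$ is then empty, in particular a disjoint union of $A$-type configurations, so \cite[Thm.~1.3]{uehara_a_trichotomy_for_the_autoequivalence_groups_on_smooth_projective_surfaces} applies and, exactly as in \cref{sec:general_case}, the absence of $(-2)$-curves forces $\Aut(\sfD^b(X))$ to consist only of standard autoequivalences, there being no spherical objects and hence no nontrivial spherical twists. It remains to justify the example in the statement: if $Y$ has Kodaira dimension $1$ and $\pi\colon Y\to B$ has only irreducible fibres, then a hypothetical $(-2)$-curve $C$ would satisfy $K_Y\cdot C=0$, and since $K_Y$ is numerically a positive multiple of a general fibre $F$ this gives $F\cdot C=0$; thus $C$ is contained in a fibre, which being irreducible forces $C$ to be the reduction of a whole fibre, whence $C^2=0$, contradicting $C^2=-2$. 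So such $Y$ has no $(-2)$-curves and the theorem applies.
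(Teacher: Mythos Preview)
Your proposal is correct and follows the same overall strategy as the paper: verify $N_X=2$ via \cite[Thm.~1.1]{uehara_a_trichotomy_for_the_autoequivalence_groups_on_smooth_projective_surfaces}, invoke \cref{prop:-2_curves_obj_in_blow_up_nef_canonical} to conclude $X$ has no $(-2)$-curves, and then apply \cite[Thm.~1.3]{uehara_a_trichotomy_for_the_autoequivalence_groups_on_smooth_projective_surfaces}. The one substantive difference is how you rule out a minimal elliptic fibration on $X$. The paper argues case by case on $\kappa(Y)$: for $\kappa(Y)\in\{0,1\}$ one has $K_Y^2=0$, so blowing up at least one point forces $K_X^2<0$, which is impossible for a relatively minimal elliptic surface; for $\kappa(Y)=2$ it quotes \cite[Ch.~V, Prop.~12.5]{barth_hulek_peters_van_de_ven_compact_complex_surface}. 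Your route via Kodaira's canonical bundle formula (a relatively minimal elliptic surface of nonnegative Kodaira dimension has nef canonical class, hence is already minimal, while $X$ is not) is uniform across all three cases and avoids the external reference; the paper's argument is slightly more elementary in that it only needs the single numerical fact $K_S^2=0$. You also supply the verification of the parenthetical example, which the paper leaves implicit.

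One small imprecision: in your final step you write that the absence of $(-2)$-curves gives ``no spherical objects and hence no nontrivial spherical twists.'' The implication ``no $(-2)$-curves $\Rightarrow$ no spherical objects'' is not what \cite[Thm.~1.3]{uehara_a_trichotomy_for_the_autoequivalence_groups_on_smooth_projective_surfaces} provides and is not needed here; that theorem already asserts that $\Aut(\sfD^b(X))$ is generated by standard autoequivalences together with the spherical twists attached to the $(-2)$-curve configurations, so the absence of $(-2)$-curves directly yields the conclusion without any detour through general spherical objects.
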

	\begin{proof}
		By \cref{prop:-2_curves_obj_in_blow_up_nef_canonical}, $X$ contains no $(-2)$-curves.
		Thus, the statement follows from \cite[Thm.~1.1, Thm.~1.3]{uehara_a_trichotomy_for_the_autoequivalence_groups_on_smooth_projective_surfaces} if $X$ admits no minimal elliptic fibration. The latter can be shown as follows:        
		Recall, e.g.\ from \cite[Cor.~4.1.7]{cossec_dolgachev_liedtke_enriques_i}, that a surface $S$ with minimal elliptic fibration satisfies $K_S^2=0$.
		If $\kappa(Y)=0$, then $K_Y$ is numerically equivalent to zero. Hence, $K_Y^2=0$ and therefore $K_X^2<0$.
		If $\kappa(Y)=1$, then $Y$ has an elliptic fibration and therefore $K_Y^2=0$. Hence, $K_X^2<0$.
		Finally, if $\kappa(Y)=2$, then $X$ has no elliptic fibration by \cite[Ch.~V, Prop.~12.5]{barth_hulek_peters_van_de_ven_compact_complex_surface}.
	\end{proof}
	
	\begin{remark}
		Note that the description of autoequivalences as in \cref{thm:short_version_blow_up_nonneg_kod_dim} is not true for a minimal surface $Y$.
		For example, if $\kappa(Y)= 1$, then $\Aut(\sfD^b(Y))$ can be characterized as in \cite[Thm.~4.1]{uehara_autoequvialences_of_derived_categories_of_elliptic_surfaces_with_non_zero_kodaira_dimension}.
		In that case, 
		as outlined in the proof of \cite[Thm.~1.1]{uehara_a_trichotomy_for_the_autoequivalence_groups_on_smooth_projective_surfaces}, $Y$ admits an autoequivalence $\Phi_\mcU$ where $\mcU$ is the universal sheaf on $Y \times J_Y(1,1)$ and $J_Y(1,1)\cong Y$ is a moduli space of stable sheaves on a smooth fiber of the elliptic fibration of $Y$.
		In this case, the support of $\mcU$ is $3$-dimensional, thus $\Phi_\mcU$ does not lift to an autoequivalence of a blow-up of $Y$.
	\end{remark}
	\begin{remark}[Infinitely near points]
		Let $X$ be a non-minimal surface of nonnegative Kodaira dimension with minimal model $Y$. If the $(-2)$-curves in $Y$ only form chains of type~$A$, then it is possible to describe $\Aut(\sfD^b(X))$ as in \cite[Thm.~1.3]{uehara_a_trichotomy_for_the_autoequivalence_groups_on_smooth_projective_surfaces}.
		Indeed, arguing as in \cite[Thm.~1.5]{ishii_uehara_autoequivalences_of_derived_categories_on_the_minimal_resolutions_of_an_singularities_on_surfaces} one shows that the $(-2)$-curves in $X$ only form chains of type~$A$.
		Thus, \cite[Thm.~1.3]{uehara_a_trichotomy_for_the_autoequivalence_groups_on_smooth_projective_surfaces} applies and shows that $\Aut(\sfD^b(X))$ is generated by standard autoequivalences and spherical twists.
	\end{remark}
	
	\subsection{Spherical Objects}
	Similar to \cref{prop:-2_curves_obj_in_blow_up_nef_canonical}, spherical objects in the blow-up of a minimal surface of positive Kodaira dimension are completely determined by the minimal surface.
	
	We begin with recalling two elementary \cref{lem:disjoint_support,lem:characterising_support_by_morphisms} regarding morphisms and the support of complexes of sheaves.
	As we were unable to find a suitable statement in the literature, we include a proof of \cref{lem:disjoint_support}.
	\begin{lemma}\label{lem:disjoint_support}
		Let $X$ be a smooth projective variety and let $F, G\in \sfD^b(X)$.
		\begin{enumerate}
			\item \label{item:first_item}If $\Supp(F) \cap \Supp(G) = \emptyset$, then $\Hom_{\sfD^b(X)}(F, G) =0$.
			\item \label{item:second_item} If $D \subseteq X$ is a divisor and $\Supp(F) \cap D = \emptyset$, then $F \otimes \mcO_X(D) =  F$.
		\end{enumerate}
	\end{lemma}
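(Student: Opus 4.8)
The plan is to deduce both parts from two soft \emph{locality} facts: that $\mathbf{R}\sheafhom(-,-)$ and $-\lotimes-$ are computed locally on $X$, and that an object of $\sfD^b(X)$ is zero as soon as its restriction to each member of an open cover vanishes. The one preliminary observation I record is that if $U\subseteq X$ is open with $U\cap\Supp(F)=\emptyset$, then $F|_U\cong 0$ in $\sfD^b(U)$: indeed $\Supp(F)=\bigcup_i\Supp(\mcH^i(F))$, and since restriction to an open subset is exact, $\mcH^i(F|_U)=\mcH^i(F)|_U=0$ for every $i$.

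For part \labelcref{item:first_item} I would use the standard identity $\RHom_{\sfD^b(X)}(F,G)=\mathbf{R}\Gamma\bigl(X,\mathbf{R}\sheafhom(F,G)\bigr)$, which reduces the claim to showing $\mathbf{R}\sheafhom(F,G)\cong 0$ in $\sfD^b(X)$; then $\Hom_{\sfD^b(X)}(F,G)=\nH^0\RHom_{\sfD^b(X)}(F,G)=0$, and in fact $\Ext^i_X(F,G)=0$ for all $i$. Now $\mathbf{R}\sheafhom$ commutes with restriction to open subsets, as one sees by computing it from a bounded-above locally free resolution of its first argument, which can be restricted. Since $\Supp(F)\cap\Supp(G)=\emptyset$, the opens $X\setminus\Supp(F)$ and $X\setminus\Supp(G)$ cover $X$, and by the preliminary observation $\mathbf{R}\sheafhom(F,G)$ restricts to $\mathbf{R}\sheafhom(0,-)=0$ on each of them; hence it vanishes on an open cover and is therefore $0$. (A dévissage along the truncation triangles of $F$ and of $G$ would work just as well, reducing the statement to the vanishing of $\Ext^i_X(\mathcal F,\mathcal G)$ for coherent sheaves with disjoint support, which is immediate from $\sheafext^i(\mathcal F,\mathcal G)=0$ and the local-to-global spectral sequence.)

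For part \labelcref{item:second_item}, the hypothesis $D\subseteq X$ means that $D$ is an effective Cartier divisor, so that $\mcO_D$ is defined. I would tensor the short exact sequence $0\to\mcO_X(-D)\to\mcO_X\to\mcO_D\to 0$ with $F$; since $\mcO_X(-D)$ and $\mcO_X$ are line bundles this coincides with the derived tensor product and produces a distinguished triangle
\begin{equation*}
	F\otimes\mcO_X(-D)\lra F\lra F\lotimes\mcO_D\lra \bigl(F\otimes\mcO_X(-D)\bigr)[1]
\end{equation*}
in $\sfD^b(X)$. By the same locality of $\lotimes$ (together with $0\lotimes-=0$) one has $\Supp(F\lotimes\mcO_D)\subseteq\Supp(F)\cap D=\emptyset$, so $F\lotimes\mcO_D\cong 0$ and the first arrow is an isomorphism $F\otimes\mcO_X(-D)\xrightarrow{\ \sim\ }F$. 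Applying the autoequivalence $-\otimes\mcO_X(D)$ to this isomorphism and using $\mcO_X(-D)\otimes\mcO_X(D)\cong\mcO_X$ yields $F\cong F\otimes\mcO_X(D)$, as required.

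I do not expect a genuine obstacle: the argument is formal, and the only points needing (routine) justification are the two locality assertions above, i.e.\ the support containments $\Supp\bigl(\mathbf{R}\sheafhom(F,G)\bigr)\subseteq\Supp(F)\cap\Supp(G)$ and $\Supp(F\lotimes G)\subseteq\Supp(F)\cap\Supp(G)$, both of which follow from the compatibility of $\mathbf{R}\sheafhom$ and $\lotimes$ with passing to open subsets.
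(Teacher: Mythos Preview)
Your argument is correct; both parts go through as written. The approach, however, differs in flavor from the paper's.

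For \labelcref{item:first_item}, the paper proceeds by d\'evissage via spectral sequences: first two hypercohomology spectral sequences reduce $\sheafext^{\bullet}_{\mcO_X}(F,G)$ to $\sheafext^p_{\mcO_X}(\mcH^{-q}(F),\mcH^l(G))$, which vanish since the cohomology sheaves have disjoint supports, and then the local-to-global spectral sequence gives $\Ext^{\bullet}_{\mcO_X}(F,G)=0$. Your open-cover argument bypasses this bookkeeping entirely by observing that $\mathbf{R}\sheafhom(F,G)$ restricts to zero on each of $X\setminus\Supp(F)$ and $X\setminus\Supp(G)$; this is slicker and isolates the geometric content (locality of $\mathbf{R}\sheafhom$) more transparently. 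You even mention the d\'evissage route as an alternative, which is essentially what the paper does.

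For \labelcref{item:second_item}, both arguments start from the ideal sheaf sequence $0\to\mcO_X(-D)\to\mcO_X\to\mcO_D\to 0$, but the paper applies $\mathbf{R}\sheafhom_{\mcO_X}(-,F)$ and kills the outer terms $\sheafhom(\mcO_D,F)$ and $\sheafext^1(\mcO_D,F)$ using the sheaf-$\Ext$ vanishing established in \labelcref{item:first_item}, whereas you apply $F\lotimes-$ and kill $F\lotimes\mcO_D$ directly via the support containment $\Supp(F\lotimes\mcO_D)\subseteq\Supp(F)\cap D$. Your version is self-contained (it does not invoke part \labelcref{item:first_item}) and avoids the mild awkwardness of writing a four-term exact sequence of complexes; the paper's version has the minor advantage of reusing the machinery already set up.
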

	\begin{proof}
		We first prove \labelcref{item:first_item}.
		The condition $\Supp(F) \cap \Supp(G) = \emptyset$ implies $\sheafext_{\mcO_X}^p(\mcH^{-q}(F), \mcH^l(G)) =0$ for all $p,q,l \in \bbZ$.
		Recall, e.g.\ from \cite[p.~77]{huybrechts_fourier_mukai_transforms_in_algebraic_geometry}, that we have a spectral sequence
		$$E_2^{p,q} = \sheafext_{\mcO_X}^p(\mcH^{-q}(F), \mcH^l(G)) \Rightarrow \sheafext_{\mcO_X}^{p+q}(F, \mcH^l(G))$$
		for every $l \in \bbZ$.
		Similarly, we have a spectral sequence
		$$E_2^{p,q} = \sheafext_{\mcO_X}^p(F, \mcH^q(G)) \Rightarrow \sheafext_{\mcO_X}^{p+q}(F, G).$$
		Thus, $\Supp(F) \cap \Supp(G) = \emptyset$ implies $\sheafext_{\mcO_X}^{l}(F, G) =0$ for all $l \in \bbZ$.
		Finally, the local-to-global spectral sequence
		$$E^{p,q}_2 = H^p(X, \sheafext_{\mcO_X}^q(F, G)) \Rightarrow \Ext_{\mcO_X}^{p+q}(F, G)$$
		shows $\Ext_{\mcO_X}^{p+q}(F, G)=0$.
		
		To prove \labelcref{item:second_item}, assume that $D \subseteq X$ is a divisor and that $\Supp(F) \cap D = \emptyset$.
		The ideal sheaf sequence
		$$0 \to \mcO_X(-D) \to \mcO_X \to \mcO_D \to 0$$ yields an exact sequence
		$$0 \to \sheafhom_{\mcO_X}(\mcO_D, F) \to F \to F \otimes \mcO_X(D) \to \sheafext_{\mcO_X}^1(\mcO_D, F) \to 0.$$
		As argued above, we have $\sheafhom_{\mcO_X}(\mcO_D, F) =0 =\sheafext_{\mcO_X}^1(\mcO_D, F)$. Hence, $F \to F \otimes \mcO_X(D)$ is an isomorphism.
	\end{proof}
	
	\begin{lemma}[{\cite[Lem.~5.3]{bridgeland_maciocia_fm_trsnforms_for_k3_and_elliptic_fibrations}}]\label{lem:characterising_support_by_morphisms}
		Let $X$ be a smooth projective variety and $F \in \sfD^b(X)$. Then a point $x \in X$ lies in $\Supp(F)$ if and only if $\Hom_{\sfD^b(X)}(F, k(x)[l]) \neq 0$ for some $l \in \bbZ$.
	\end{lemma}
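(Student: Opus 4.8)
The plan is to derive both implications from the description $\Supp(F)=\bigcup_{i}\Supp(\mcH^{i}(F))$, from \cref{lem:disjoint_support}, and from the elementary fact that for a coherent sheaf $\mcF$ on $X$ and a point $x\in X$ there is a natural isomorphism $\Hom_{\mcO_{X}}(\mcF,k(x))\cong\Hom_{k(x)}(\mcF\otimes k(x),k(x))$; by Nakayama's lemma the fibre $\mcF\otimes k(x)$ is nonzero exactly when $x\in\Supp(\mcF)$, so this $\Hom$-space is nonzero if and only if $x\in\Supp(\mcF)$.

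The ``only if'' direction is immediate: if $x\notin\Supp(F)$, then $\Supp(k(x)[l])=\{x\}$ is disjoint from $\Supp(F)$ for every $l\in\bbZ$, whence $\Hom_{\sfD^{b}(X)}(F,k(x)[l])=0$ by \cref{lem:disjoint_support}\,\labelcref{item:first_item}.

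For the converse, assume $x\in\Supp(F)$ and set $m\coloneqq\max\{\,i\in\bbZ : x\in\Supp(\mcH^{i}(F))\,\}$, which exists since $F$ has only finitely many nonzero cohomology sheaves. First I would pass to the truncation $\tau_{\le m}F$ via the triangle
$$\tau_{\le m}F\to F\to\tau_{\ge m+1}F\xra{+1}.$$
The complex $\tau_{\ge m+1}F$ has cohomology only in degrees $>m$, none of which is supported at $x$ by maximality of $m$; thus $x\notin\Supp(\tau_{\ge m+1}F)$ and \cref{lem:disjoint_support}\,\labelcref{item:first_item} gives $\Hom(\tau_{\ge m+1}F,k(x)[l])=0$ for all $l$. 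Applying $\Hom(-,k(x)[-m])$ to the triangle then produces an isomorphism $\Hom(F,k(x)[-m])\cong\Hom(\tau_{\le m}F,k(x)[-m])$, so it remains to show the latter is nonzero. For this I would apply $\Hom(-,k(x)[-m])$ to the second truncation triangle $\tau_{\le m-1}F\to\tau_{\le m}F\to\mcH^{m}(F)[-m]\xra{+1}$ and use that $\Hom(\tau_{\le m-1}F,k(x)[-m-1])=0$: a complex $G$ with $\mcH^{i}(G)=0$ for $i>N$ admits no nonzero morphism to $k(x)[n]$ when $n<-N$, which follows by an easy induction on the number of nonzero cohomology sheaves of $G$ via its truncation triangles, using $\Ext^{<0}_{\mcO_{X}}(-,k(x))=0$ (equivalently, from the hypercohomology spectral sequence $E_{2}^{s,t}=\Ext^{s}_{\mcO_{X}}(\mcH^{-t}(G),k(x))\Rightarrow\Hom(G,k(x)[s+t])$). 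Hence the connecting map embeds $\Hom(\mcH^{m}(F),k(x))=\Hom(\mcH^{m}(F)[-m],k(x)[-m])$ into $\Hom(\tau_{\le m}F,k(x)[-m])$, and the source is nonzero by the fibre computation of the first paragraph because $x\in\Supp(\mcH^{m}(F))$. Therefore $\Hom(F,k(x)[-m])\neq 0$, which finishes the proof.

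The argument is largely formal, and I do not anticipate a genuine obstacle. The only step requiring a short verification is the vanishing of the boundary group $\Hom(\tau_{\le m-1}F,k(x)[-m-1])$, that is, the bound on the cohomological degrees in which $\Hom(G,k(x)[\bullet])$ can be nonzero in terms of the amplitude of $G$; the choice of $m$ as the \emph{largest} index with $x\in\Supp(\mcH^{i}(F))$ is exactly what makes both the ``top'' truncation $\tau_{\ge m+1}F$ invisible to $k(x)$ and the connecting map into $\Hom(\tau_{\le m}F,k(x)[-m])$ injective.
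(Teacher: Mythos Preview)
Your proof is correct. Note that the paper does not supply its own proof of this lemma; it merely cites \cite[Lem.~5.3]{bridgeland_maciocia_fm_trsnforms_for_k3_and_elliptic_fibrations}, so there is nothing to compare against in the paper itself. Your argument---using the disjoint-support vanishing of \cref{lem:disjoint_support} for one direction, and for the other picking the top cohomological index $m$ with $x\in\Supp(\mcH^{m}(F))$, then peeling off $\tau_{\ge m+1}F$ and $\tau_{\le m-1}F$ via truncation triangles to reduce to $\Hom_{\mcO_X}(\mcH^{m}(F),k(x))\neq 0$---is the standard approach and goes through without issue. One minor terminological quibble: the injection $\Hom(\mcH^{m}(F)[-m],k(x)[-m])\hookrightarrow\Hom(\tau_{\le m}F,k(x)[-m])$ is not the connecting map of the long exact sequence but rather the map induced by $\tau_{\le m}F\to\mcH^{m}(F)[-m]$; the mathematics, however, is unaffected.
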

	
	The following \cref{prop:spherical_obj_in_blow_up_nef_canonical} characterizes spherical objects in blow-ups of minimal surfaces of positive Kodaira dimension.
	\begin{proposition}\label{prop:spherical_obj_in_blow_up_nef_canonical}
		Let $Y$ be a minimal surface of positive Kodaira dimension and let $p \colon X \to Y$ be the blow-up of $Y$ in a set of points $p_1, \dots, p_n \in Y$.
		Then every spherical object in $\sfD^b(X)$ is of the form $\mathrm{L}p^*S$ for some spherical object $S \in \sfD^b(Y)$.
		Moreover, if $S \in \sfD^b(Y)$ is spherical, then $\mathrm{L}p^*S$ is spherical if and only if $p_i \notin \Supp(S)$ for all $1 \leq i \leq n$.
	\end{proposition}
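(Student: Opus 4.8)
The plan is, given a spherical object $F\in\sfD^b(X)$, to set $S\coloneqq\mathrm{R}p_*F$, to prove that the counit $\mathrm{L}p^*\mathrm{R}p_*F\to F$ is an isomorphism, and to verify that $S$ is spherical; the ``moreover'' clause then drops out of the same support bookkeeping.

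The first step is to locate $\Supp(F)$. Since $\kappa(X)=\kappa(Y)>0$, the class $K_X$ is non-torsion, so \cref{lem:spherical_obj_supported_on_curve} applies; as $F$ is spherical it is not a shifted skyscraper, so $\Supp(F)$ is a connected curve $C=\bigcup_iC_i$ with $K_X\cdot C_i=0$ for every irreducible component $C_i$. No $C_i$ is an exceptional curve $E_j$ (those satisfy $K_X\cdot E_j=-1$), so $C_i$ is the strict transform of an integral curve $C_{i,0}\subseteq Y$, and with $m_{i,k}\coloneqq\operatorname{mult}_{p_k}(C_{i,0})$ one has
\[
0=K_X\cdot C_i=K_Y\cdot C_{i,0}+\sum_{k=1}^n m_{i,k}.
\]
As $Y$ is minimal of nonnegative Kodaira dimension, $K_Y$ is nef, so $K_Y\cdot C_{i,0}\ge0$ and hence all $m_{i,k}=0$; thus $\Supp(F)$ is disjoint from the exceptional locus $E\coloneqq\bigcup_j E_j$, and $p$ restricts to an isomorphism over an open neighbourhood of $\Supp(F)$. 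This is the step that uses the geometry of $Y$ — it is the analogue for spherical objects of \cref{prop:-2_curves_obj_in_blow_up_nef_canonical} — and I expect it to be the only nonformal point; everything afterwards is support bookkeeping and the projection formula.

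Next I would run the descent. Put $S\coloneqq\mathrm{R}p_*F$. Then $\Supp(S)\subseteq p(\Supp F)$ and $\Supp(\mathrm{L}p^*S)\subseteq p^{-1}p(\Supp F)$ are both disjoint from $E$, as is $\Supp(F)$; and on $X\setminus E$ the counit $\varepsilon\colon\mathrm{L}p^*\mathrm{R}p_*F\to F$ is an isomorphism because $p$ is an isomorphism there. Hence $\mathrm{Cone}(\varepsilon)$ is supported both on $E$ and on its complement, so it vanishes and $F\cong\mathrm{L}p^*S$. That $S$ is spherical is then formal: from $\mathrm{R}p_*\mcO_X=\mcO_Y$ and the projection formula one gets $\mathrm{R}p_*\mathrm{L}p^*\simeq\id$, so $\Hom_Y(S,S[i])\cong\Hom_X(\mathrm{L}p^*S,\mathrm{L}p^*S[i])=\Hom_X(F,F[i])$ has the spherical dimensions; and from $F\otimes\omega_X\cong F$, $\omega_X\cong p^*\omega_Y\otimes\mcO_X(E)$ and $\mcO_X(E)\otimes F\cong F$ (by \cref{lem:disjoint_support}\,\labelcref{item:second_item}, since $\Supp F\cap E=\emptyset$) one gets $\mathrm{L}p^*(S\otimes\omega_Y)\cong\mathrm{L}p^*S$, and applying $\mathrm{R}p_*$ yields $S\otimes\omega_Y\cong S$.

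Finally, the ``moreover'' clause. If $\mathrm{L}p^*S$ is spherical, the first step applied to $F=\mathrm{L}p^*S$ shows $\Supp(\mathrm{L}p^*S)\cap E=\emptyset$; since $S\cong\mathrm{R}p_*\mathrm{L}p^*S$ has support contained in $p(\Supp\mathrm{L}p^*S)$ and $p^{-1}(p_i)=E_i$, this forces $p_i\notin\Supp(S)$ for all $i$. Conversely, if $p_i\notin\Supp(S)$ for all $i$, then $\Supp(\mathrm{L}p^*S)=p^{-1}(\Supp S)$ is disjoint from $E$, so $\mathrm{R}p_*\mathrm{L}p^*\simeq\id$ gives $\Hom_X(\mathrm{L}p^*S,\mathrm{L}p^*S[i])\cong\Hom_Y(S,S[i])$ with the spherical dimensions, while $\mcO_X(E)\otimes\mathrm{L}p^*S\cong\mathrm{L}p^*S$ together with $S\otimes\omega_Y\cong S$ and $\omega_X\cong p^*\omega_Y\otimes\mcO_X(E)$ gives $\mathrm{L}p^*S\otimes\omega_X\cong\mathrm{L}p^*S$; hence $\mathrm{L}p^*S$ is spherical. (Alternatively, when some $p_j\in\Supp S$ the Calabi--Yau condition can be seen to fail directly: restricting $\mathrm{L}p^*S$ and $\mathrm{L}p^*S\otimes\omega_X$ to $E_j\cong\bbP^1$ yields a nonzero finite sum of shifts of $\mcO_{\bbP^1}$ respectively of $\mcO_{\bbP^1}(-1)$, which have different hypercohomology, so $\mathrm{L}p^*S\otimes\omega_X\not\cong\mathrm{L}p^*S$.)
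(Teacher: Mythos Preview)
Your argument is correct and tracks the paper's proof closely in all but one step: the paper obtains $F\cong\mathrm{L}p^*S$ not via the counit but via Orlov's semiorthogonal decomposition
\[
\sfD^b(X)=\langle \mcO_{E_1}(-1),\dots,\mcO_{E_n}(-1),\ \mathrm{L}p^*\sfD^b(Y)\rangle,
\]
observing that $\Hom_X(F,\mcO_{E_i}(-1)[l])=0=\Hom_X(\mcO_{E_i}(-1),F[l])$ for all $l$ by \cref{lem:disjoint_support}, so that $F$ lies in the admissible subcategory $\mathrm{L}p^*\sfD^b(Y)$. Your counit argument --- that $\mathrm{Cone}(\varepsilon)$ is supported simultaneously on $E$ (because $\varepsilon$ is an isomorphism over $X\setminus E$) and away from $E$ (because both $F$ and $\mathrm{L}p^*\mathrm{R}p_*F$ have support disjoint from $E$), hence vanishes --- is more elementary and self-contained, avoiding the appeal to the blow-up decomposition. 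The paper's route, on the other hand, makes transparent that the statement is really about the component of $F$ in a known semiorthogonal decomposition and would adapt immediately to other birational contractions with an analogous decomposition. The support analysis, the projection-formula computation of $\Hom$'s, the verification of $S\otimes\omega_Y\cong S$, and the ``moreover'' clause are handled the same way in both proofs.
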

	\begin{proof}
		We denote by $E_i$ the exceptional divisor over the $i$-th blown up point $p_i$ for $1 \leq i \leq n$.
		We first prove the following
		\begin{claim}
			If $S' \in \sfD^b(X)$ is a spherical object, then $\Supp(S')$ is disjoint from each $E_i$.
		\end{claim}
		\begin{proof}[Proof of the claim.]
			Assume $S' \in \sfD^b(X)$ is spherical, then, by \cref{lem:spherical_obj_supported_on_curve}, $\Supp(S') = \bigcup_i C_i$, where each $C_i$ is an integral curve with $K_X \cdot C_i =0$.
			Since $K_X = p^*K_Y + \sum_i E_i$, such curve $C_i$ is the strict transform of a curve in $Y$.
			Moreover, if $C_0$ is a curve in $Y$, the strict transform of $C_0$ has class $p^*C_0 - \sum_i m_i E_i$, where $m_i$ is the multiplicity of $C_0$ at $p_i$.
			We compute that
			$$K_X \cdot \left(p^*C_0 - \sum_{i=1}^n m_i E_i \right) = K_Y \cdot C_0 + \sum_{i=1}^n m_i.$$
			Since $K_Y$ is nef, we have $K_Y \cdot C_0 \geq 0$ and therefore $p_i \notin C_0$ for all $1\leq i\leq n$.
		\end{proof}                  
		
		Recall that $\sfD^b(X)$ admits a semiorthogonal decomposition
		$$\sfD^b(X) = \langle \mcO_{E_1}(-1), \dots, \mcO_{E_n}(-1), \mathrm{L}p^* \sfD^b(Y) \rangle.$$
		Since $\Supp(S')$ is disjoint from each $E_i$, we have, by \cref{lem:disjoint_support},
		$$\Hom_{\sfD^b(X)} (S', \mcO_{E_i}(-1)[l])=0=\Hom_{\sfD^b(X)} ( \mcO_{E_i}(-1), S'[l])$$
		for every $l \in \bbZ$.
		Hence, $S' \in \mathrm{L}p^* \sfD^b(Y)$, i.e., there exists a object $S \in \sfD^b(Y)$ such that $\mathrm{L}p^{*}S\cong S'$.
		Note that $\mathrm{R}p_*\mcO_X = \mcO_Y$ implies
		\begin{align}\label{eq:Hom_sequence}
			& \Hom_{\sfD^{b}(X)}(S',S'[l])=\Hom_{\sfD^b(X)}(\mathrm{L}p^* S, \mathrm{L}p^* S[l]) = \Hom_{\sfD^b(Y)}( S,\mathrm{R}p_* \mathrm{L}p^* S[l]) \\
			={}& \Hom_{\sfD^b(Y)}( S, S\otimes^\mathrm{L} \mathrm{R}p_*\mcO_X[l]) = \Hom_{\sfD^b(Y)}( S, S[l]). \nonumber
		\end{align}
		for every $l\in \bbZ$.
		Moreover, since $\Supp(S')$ is disjoint from the exceptional divisors $E_{i}$, \cref{lem:disjoint_support} shows that $\mathrm{L}p^{*}S\otimes \mcO_{X}(\sum_{i}E_{i}) = \mathrm{L}p^{*}S$. Hence, $\mathrm{L}p^{*}S \otimes p^* \omega_Y \cong \mathrm{L}p^{*}S$.
		Pushing forward via $\mathrm{R}p_*$ and using the projection formula shows that $S \otimes \omega_Y \cong S$. Thus, $S$ is a spherical object in $\sfD^b(Y)$.
		
		Now let $S \in \sfD^b(Y)$ be a spherical object. 
		As in \cref{eq:Hom_sequence}, we have
		\begin{align*}
			& \Hom_{\sfD^b(X)}(\mathrm{L}p^* S, \mathrm{L}p^* S[l]) = \Hom_{\sfD^b(Y)}( S, S[l]) 
		\end{align*}
		for every $l \in \bbZ$.
		Thus, $\mathrm{L}p^* S$ is spherical if $\mathrm{L}p^*S \otimes \omega_X \cong \mathrm{L}p^*S$.
		Let $x \in X$ be a point, then $\mathrm{R}p_* k(x)=k(p(x))$ and by adjunction
		$$\Hom_{\sfD^{b}(X)}( S, \mathrm{R}p_* k(x)[l]) = \Hom_{\sfD^{b}(X)}(\mathrm{L}p^*S, k(x)[l])$$
		for every $l \in \bbZ$.
		Hence, \cref{lem:characterising_support_by_morphisms} shows that  $\Supp(\mathrm{L}p^* S ) = p^{-1}(\Supp(S))$.
		By the previous claim, 
		it is necessary that $p^{-1}(\Supp(S))$ is disjoint from each $E_i$
		for $\mathrm{L}p^* S$ to be spherical.
		On the other hand, this is also sufficient, since $\mathrm{L}p^* S \otimes \mcO_X(\sum_i E_i)= \mathrm{L}p^* S$ holds by \cref{lem:disjoint_support} if $p^{-1}(\Supp(S))$ is disjoint from each $E_i$.
	\end{proof}
	
	\begin{remark}\label{rmk:spherical_objects_base_no_spherical_nef_canonical}
		Let $Y$ be a minimal surface of Kodaira dimension $1$ whose elliptic fibration has only irreducible fibers.
		It follows from the description of $\Aut(\sfD^b(Y))$ in \cite{uehara_autoequvialences_of_derived_categories_of_elliptic_surfaces_with_non_zero_kodaira_dimension} that $\sfD^b(Y)$ does not contain spherical objects.
		Thus, if $X$ is a blow-up of $Y$ in a finite set of points, then, by \cref{prop:spherical_obj_in_blow_up_nef_canonical}, $\sfD^b(X)$ does not contain spherical objects either.
		Alternately, this can also be deduced from \cref{thm:short_version_blow_up_nonneg_kod_dim}.
	\end{remark}

	\printbibliography

\end{document}